\title{Visser frames for sublogics of $\IL$}
\author{Yuya Okawa\footnote{Email: math.y.okawa@gmail.com}
\footnote{Graduate School of Science and Engineering, Chiba University, Japan} and Taishi Kurahashi\footnote{Email: kurahashi@people.kobe-u.ac.jp}
\footnote{Graduate School of System Informatics,
Kobe University, Japan}}
\date{}
\theoremstyle{plain}
\newtheorem{thm}{Theorem}[section]
\newtheorem{lem}[thm]{Lemma}
\newtheorem{prop}[thm]{Proposition}
\newtheorem{cor}[thm]{Corollary}
\newtheorem{fact}[thm]{Fact}
\newtheorem{prob}[thm]{Problem}
\newtheorem*{cl}{Claim}
\theoremstyle{definition}
\newtheorem{defn}[thm]{Definition}
\newtheorem{rem}[thm]{Remark}
\newcommand{\PA}{\mathbf{PA}}
\newcommand{\CL}{\mathbf{CL}}
\newcommand{\IL}{\mathbf{IL}}
\newcommand{\ILM}{\mathbf{ILM}}
\newcommand{\Sub}{\mathrm{Sub}}
\newcommand{\seq}[1]{\langle#1\rangle}
\newcommand{\G}[1]{\mathbf{L#1}}
\newcommand{\J}[1]{\mathbf{J#1}}
\newcommand{\R}[1]{\mathbf{R#1}}
\newcommand{\pre}{\mathsf{p}}
\begin{document}

\maketitle

\begin{abstract}
We study the modal completeness and the finite frame property of several sublogics of the logic $\mathbf{IL}$ of interpretability with respect to Visser frames, which are also called simplified Veltman frames. 
Among other things, we prove that the logic $\mathbf{CL}$ of conservativity has the finite frame property with respect to that frames. 
This is an affirmative solution to Ignatiev's problem. 
\end{abstract}

\section{Introduction}

Throughout the introduction, let $T$ stand for consistent computably enumerable theories extending Peano Arithmetic $\PA$. 
The notion of interpretability has been investigated through the framework of modal logic. 
The language of interpretability logics is obtained from that of modal propositional logic by adding the binary modal operator $\rhd$, where the modal formula $A \rhd B$ is intended to mean ``$T+B$ is interpretable in $T+A$". 
The logic $\IL$ of interpretability is a base logic to investigate several interpretability logics. 
Berarducci~\cite{Ber90} and Shavrukov~\cite{Sha88} independently proved that the extension $\ILM$ of $\IL$ is arithmetically complete with respect to arithmetical interpretations based on any $\Sigma_{1}$-sound theory $T$.  
It is known that the notions of interpretability and $\Pi_1$-conservativity coincide for computably enumerable extensions of $\PA$, so the logic $\ILM$ is also the logic of $\Pi_{1}$-conservativity (cf.~H\'ajek and Montagna~\cite{HajMon90}), where the formula $A \rhd B$ is read as ``$T+B$ is $\Pi_1$-conservative over $T+A$".  
In this context, Ignatiev~\cite{Ign91} further investigated the logic of $\Gamma$-conservativity for $\Gamma \in \{\Sigma_{n}, \Pi_{n} : n \geq 1\}$ and introduced the logic $\CL$ of conservativity, that is a sublogic of $\IL$, as a base logic to investigate several conservativity logics.

The logics $\IL$ and $\CL$ are complete with respect to a relational semantics, which is called Veltman semantics. 
We say that a triple $(W, R, \{S_{x}\}_{x \in W})$ is an \textit{$\IL$-frame} if $W$ is a non-empty set, $R$ is a transitive and conversely well-founded binary relation on $W$, and for each $x \in W$, $S_{x}$ is a transitive and reflexive binary relation on $R[x] :=\{y \in W \mid xRy\}$ satisfying the condition ($\dagger$): $(\forall x, y, z \in W)(xRy \, \& \, yRz \Rightarrow y S_{x} z)$. 
Also, a \textit{$\CL$-frame} is obtained by removing the condition $(\dagger)$ from the definition of an $\IL$-frame. 
Then, de Jongh and Veltman~\cite{deJVel90} proved that $\IL$ is sound and complete with respect to the class of all finite $\IL$-frames. 
Moreover, Ignatiev~\cite{Ign91} proved that $\CL$ is sound and complete with respect to the class of  all finite $\CL$-frames.

Visser~\cite{Vis88} introduced and investigated the notion of Visser frames, which are also called simplified Veltman frames. 
An \textit{$\IL$-Visser frame} is a triple $(W, R, S)$ where $(W, R)$ satisfies the same condition as in the definition of Veltman frames and $S$ is a transitive and reflexive binary relation on $W$ satisfying the condition ($\dagger$'): $R \subseteq S$. 
A \textit{$\CL$-Visser frame} is obtained by removing the condition ($\dagger$') from the definition of an $\IL$-Visser frame. 
Then, Visser~\cite{Vis88} proved that $\IL$ is sound and complete with respect to the class of all $\IL$-Visser frames. 
On the other hand, unlike the case of Veltman frames, Visser~\cite{Vis97} proved that $\IL$ does not have finite frame property with respect to $\IL$-Visser frames. 
Ignatiev~\cite{Ign91} proved that the modal completeness theorem of $\CL$, that is, $\CL$ is sound and complete with respect to the class of all $\CL$-Visser frames\footnote{Ignatiev actually proved a stronger result that $\CL$ is sound and complete with respect to the class of all $\CL$-Visser frames where $S$ is symmetric.}. 
Then, he proposed the following problem. 
\begin{prob}[Ignatiev~{\cite[p.~33]{Ign91}}]\label{P1}
Does the logic $\CL$ have finite frame property with respect to $\CL$-Visser frames?
\end{prob}

By using the modal completeness of $\CL$ with respect to Visser frames, Ignatiev also proved the arithmetical completeness of $\CL$. 

\begin{thm}[Ignatiev~{\cite[Theorem 2]{Ign91}}]\label{ACT}
$\CL$ is exactly the intersection of logics of $\Gamma$-conservativity for all suitable classes $\Gamma$ of sentences. 
\end{thm}

However, the authors at least think that there is one fault in his proof of Theorem \ref{ACT}.  
His proof is proceeded by embedding a counter $\CL$-Visser model $(W, R, S, \Vdash)$ obtained by his modal completeness theorem into arithmetic as in the proof of Solovay~\cite{Sol76}, where $W$ is an infinite set. 
Then, Ignatiev stated that for each $x \in W$, finite set $X \subseteq W$, and modal formula $C$, the following relation is represented by a $\Delta_{0}$ formula:
\[
(\exists {z} \in W)\bigl(xRz \land (\exists {y} \in X)({y} S {z}) \land {z} \Vdash C\bigr).
\]
However, since $W$ is an infinite set, the formula ``$\exists {z} \in W$" is not a $\Delta_{0}$ formula in general. 
In addition, the author at least think that this formula is not represented by any $\Delta_{0}$ formula as long as we use his model. 
On the other hand, if $\CL$ has the finite frame property with respect to Visser frames, then the above formula is obviously represented by a $\Delta_{0}$ formula because $W$ is a finite set. 
Therefore, if Problem~\ref{P1} is affirmatively solved, then Ignatiev's proof of Theorem \ref{ACT} is completely correct.

Veltman semantics was also introduced for other sublogics of $\IL$ than $\CL$. 
Visser~\cite{Vis88} introduced  \textit{$\IL^- \!$-frames} (or \textit{Veltman prestructures}) which are general notions of $\IL$-frames and $\CL$-frames. 
Kurahashi and Okawa~\cite{KO20} introduced the sublogic $\IL^-$ of $\IL$, and proved that $\IL^-$ is sound and complete with respect to the class of  all finite $\IL^- \!$-frames. 
In~\cite{KO20}, several sublogics of $\IL$ were introduced by adding several axioms to $\IL^-$, and their completeness with respect to $\IL^- \!$-frames was investigated. 

In this context, we  investigate finite frame property with respect to Visser frames for several sublogics of $\IL$. 
Table~\ref{Tab:FMP} summarizes the results obtained in this paper on the soundness and completeness, and the finite frame property of these sublogics with respect to Visser frames. 
In particular, Theorem \ref{SV2} answers Problem~\ref{P1} affirmatively.

\begin{table}[ht]
\caption{Soundness, completeness, and finite frame property with respect to Visser frames for sublogics of $\IL$}\label{Tab:FMP}
\centering
\begin{tabular}[t]{|l|c|c|}
\hline
 & Soundness and completeness & finite frame property \\
\hline
\hline
$\IL^-(\J{4}_{+})$ & \checkmark (Theorem~\ref{SV})  & \checkmark (Theorem~\ref{SV})\\
\hline
$\IL^-(\J{1}, \J{4}_{+})$ & \checkmark (Theorem~\ref{SV}) & \checkmark (Theorem~\ref{SV})\\
\hline
$\IL^-(\J{4}_{+}, \J{5})$ & \checkmark (Theorem~\ref{SV}) & \checkmark (Theorem~\ref{SV})\\
\hline
$\IL^-(\J{1}, \J{4}_{+}, \J{5})$ & \checkmark (Theorem~\ref{SV}) & \checkmark (Theorem~\ref{SV})\\
\hline
$\IL^-(\J{2}_{+})$ & \checkmark (Theorem~\ref{SV2})& \checkmark (Theorem~\ref{SV2})\\
\hline
$\CL$ & \checkmark (Ignatiev~\cite{Ign91}) & \checkmark (Theorem~\ref{SV2})\\
\hline
$\IL^-(\J{2}_{+}, \J{5})$ & \checkmark (Theorem~\ref{SVIL}) & $\times$ (Proposition~\ref{Cex})\\
\hline
$\IL$ & \checkmark (Visser~\cite{Vis88}) & $\times$ (Visser~\cite{Vis97})\\
\hline
\end{tabular}
\end{table}


\section{Preliminaries}

In this section, we introduce the logic $\IL$, sublogics of $\IL$, and two relational semantics: Veltman frames and Visser frames. 
Then, we introduce and prove several basic properties for these logics and frames.

The language of interpretability logics consists of propositional variables $p, q, r,\cdots$, the logical constants $\top$, $\bot$, the connectives $\to$, $\lor$, $\land$, and the modal operators $\Box$ and $\rhd$. 
The modal formulas are given by 
\[
A::= \top \mid \bot \mid p \mid A \circ A \mid \Box A \mid A \rhd A
\]
where $\circ \in \{\to, \lor, \land\}$. We define $\Diamond A :\equiv \lnot \Box \lnot A$.  
The binary modal operator $\rhd$ binds stronger than $\to$, and weaker than $\lnot$, $\land$, $\lor$, $\Box$, and $\Diamond$. 

\begin{defn}
The logic $\IL$ of interpretability is axiomatized by the following axioms and rules: 
\begin{description}
	\item[$\G{0}$:] Propositional tautologies; 
	\item[$\G{1}$:] $\Box(A \to B) \to (\Box A \to \Box B)$; 
	\item[$\G{2}$:] $\Box (\Box A \to A) \to \Box A$;
	\item[$\J{1}$:] $\Box(A \to B) \to A \rhd B$;
	\item[$\J{2}$:] $(A \rhd B) \land (B \rhd C) \to A \rhd C$;
	\item[$\J{3}$:] $(A \rhd C) \land (B \rhd C) \to (A \lor B) \rhd C$;
	\item[$\J{4}$:] $A \rhd B \to (\Diamond A \to \Diamond B)$;
	\item[$\J{5}$:] $\Diamond A \rhd A$;
	\item[Modus Ponens:] $\dfrac{A \quad A \to B}{B}$;
	\item[Necessitation:] $\dfrac{A}{\Box A}$.
\end{description}
\end{defn}

\begin{defn}
The logic $\CL$ of conservativity is obtained by removing $\J{5}$ from $\IL$. 
\end{defn}

\begin{defn}
The logic $\IL^-$ is axiomatized by the axioms $\G{0}$, $\G{1}$, $\G{2}$, $\J{3}$, and 
\begin{description}
	\item[$\J{6}$:] $\Box \lnot A \leftrightarrow A \rhd \bot$, 
\end{description}
and the rules Modus Ponens, Necessitation, 
\begin{description}
	\item [$\R{1}$:] $\dfrac{A \to B}{C \rhd A \to C \rhd B}$, and
    \item [$\R{2}$:] $\dfrac{A \to B}{B \rhd C \to A \rhd C}$.
\end{description}
\end{defn}

The authors introduced in \cite{KO20} the logic $\IL^-$ and proved that $\IL^-$ is sound and complete with respect to the class of all (finite) $\IL^- \!$-frames. 

\begin{defn}
A triple ${\mathcal F} = (W, R, \{S_{x}\}_{x \in W})$ is said to be an \textit{$\IL^- \!$-frame} if the following conditions hold: 
\begin{itemize}
	\item $W$ is a non-empty set; 
	\item $R$ is a transitive and conversely well-founded binary relation on $W$; 
	\item For each $x \in W$, $S_{x} \subseteq R[x] \times W$, where $R[x] := \{y \in W \mid xRy\}$. 
\end{itemize}
We say that ${\mathcal F}$ is finite if $W$ is a finite set. 
\end{defn}

\begin{defn}\label{Def:VM}
A quadruple $(W, R, \{S_{x}\}_{x \in W}, \Vdash)$ is said to be an \textit{$\IL^- \!$-model} if the following conditions hold: 
\begin{itemize}
	\item $(W, R, \{S_{x}\}_{x \in W})$ is an $\IL^- \!$-frame. 
	\item $\Vdash$ is a usual satisfaction relation between $W$ and the set of all modal formulas fulfilling the following clauses: 
\begin{itemize}
	\item $x \Vdash \Box A \iff (\forall y \in W)(x R y \Rightarrow y \Vdash A)$;
	\item $x \Vdash A \rhd B \iff (\forall y \in W)\bigl(x R y \, \& \, y \Vdash A \Rightarrow \exists z(y S_{x} z \, \& \, z \Vdash B)\bigr)$.
\end{itemize}
\end{itemize}
Let $A$ be any modal formula and let ${\mathcal F} = (W, R, \{S_{x}\}_{x \in W})$ be any $\IL^- \!$-frame. 
We say that $A$ is \textit{valid} in ${\mathcal F}$ (${\mathcal F} \models A$) if for any $\IL^- \!$-model $(W, R, \{S_{x}\}_{x \in W}, \Vdash)$ and $x \in W$, $x \Vdash A$. 
\end{defn}

Here, we introduce the following axioms $\J{2}_+$ and $\J{4}_+$ which were introduced by Kurahashi and Okawa~\cite{KO20} and Visser~\cite{Vis88}, respectively.
\begin{description}
	\item[$\J{2}_+$:] $\bigl(A \rhd (B \lor C) \bigr) \land B \rhd C \to A \rhd C$.
	\item[$\J{4}_+$:] $\Box (A \to B) \to (C \rhd A \to C \rhd B)$.
\end{description}
Let $L(\Sigma_1, \ldots, \Sigma_n)$ denote the logic obtained by adding the axioms $\Sigma_1, \ldots, \Sigma_n$ to the logic $L$. 
Then, the following facts hold (for a proof, see~\cite{KO20}). 

\begin{fact}\label{Fact:24}
\noindent
\begin{enumerate}
	\item $\IL^-(\J{2}_{+}) \vdash \J{2}$. 
	\item $\IL^-(\J{1}, \J{2}) \vdash \J{2}_{+}$. 
	\item $\IL^-(\J{2}_{+}) \vdash \J{4}_{+}$. 
	\item $\IL^-(\J{4}_{+}) \vdash \J{4}$. 
	\item $\IL^-(\J{1}, \J{4}) \vdash \J{4}_{+}$. 
	\item The logics $\IL$ and $\CL$ are deductively equivalent to $\IL^-(\J{1}, \J{2}, \J{5})$ and $\IL^-(\J{1}, \J{2})$, respectively
\end{enumerate}
\end{fact}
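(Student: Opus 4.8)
The plan is to establish all six items by explicit Hilbert-style derivations in the relevant calculi, relying on two recurring devices. The first is the axiom $\J{6}$, which lets one pass between a boxed implication and a $\rhd$-statement with conclusion $\bot$: since $\lnot(A \land \lnot B)$ is propositionally equivalent to $A \to B$, combining $\G{1}$, Necessitation and $\J{6}$ gives $\Box(A \to B) \leftrightarrow (A \land \lnot B) \rhd \bot$, and the special case $B := \bot$ gives $\Box \lnot A \leftrightarrow A \rhd \bot$. The second is the pair of rules $\R{1}, \R{2}$, which convert any provable implication $X \to Y$ into the monotonicity principles $C \rhd X \to C \rhd Y$ and $Y \rhd C \to X \rhd C$; applied to tautologies such as $B \to (B \lor C)$, $\bot \to C$, or $A \land B \to B$, they supply the $\rhd$-weakenings used throughout. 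When $\J{1}$ is available I would also record the theorem $C \rhd C$, obtained from $C \to C$ by Necessitation and $\J{1}$.

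Items 1 and 2 are then immediate. For item 1, apply $\R{1}$ to $B \to (B \lor C)$ to get $A \rhd B \to A \rhd (B \lor C)$; feeding $A \rhd (B \lor C)$ together with $B \rhd C$ into $\J{2}_{+}$ yields $A \rhd C$, which is $\J{2}$. For item 2, from $B \rhd C$ and $C \rhd C$ the axiom $\J{3}$ gives $(B \lor C) \rhd C$, and then $\J{2}$ applied to $A \rhd (B \lor C)$ and $(B \lor C) \rhd C$ gives $A \rhd C$, which is $\J{2}_{+}$. Items 3 and 4 are where the $\J{6}$-bridge does the work. For item 3, rewrite $\Box(A \to B)$ as $(A \land \lnot B) \rhd \bot$, weaken it by $\R{1}$ to $(A \land \lnot B) \rhd (A \land B)$, and weaken $C \rhd A$ by $\R{1}$ to $C \rhd ((A \land \lnot B) \lor (A \land B))$; a single application of $\J{2}_{+}$ then produces $C \rhd (A \land B)$, and a final $\R{1}$ on $A \land B \to B$ gives $C \rhd B$, i.e.\ $\J{4}_{+}$. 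For item 4, read $\J{4}$ in the form $A \rhd B \to (B \rhd \bot \to A \rhd \bot)$ via $\J{6}$; since $B \rhd \bot$ is $\Box \lnot B = \Box(B \to \bot)$, the instance of $\J{4}_{+}$ with consequent $\bot$ delivers exactly this, so $\J{4}$ follows.

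For item 6 I would unwind the definitions. To see $\CL \equiv \IL^-(\J{1}, \J{2})$, one inclusion requires deriving $\J{4}$ in $\IL^-(\J{1}, \J{2})$, which follows by chaining items 2, 3 and 4 ($\J{2}_{+}$, then $\J{4}_{+}$, then $\J{4}$); all remaining $\CL$-axioms and rules are already present. For the converse one must derive $\J{6}$ and the rules $\R{1}, \R{2}$ inside $\CL$: $\J{6}$ is obtained from $\J{1}$ (for $\Box \lnot A \to A \rhd \bot$) and from $\J{4}$ together with $\lnot \Diamond \bot$ (for the converse), while $\R{1}$ and $\R{2}$ follow by Necessitation, $\J{1}$ and the transitivity axiom $\J{2}$. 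The equivalence $\IL \equiv \IL^-(\J{1}, \J{2}, \J{5})$ is then obtained by adjoining $\J{5}$ to both sides.

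The genuinely delicate case, and the step I expect to be the main obstacle, is item 5: $\IL^-(\J{1}, \J{4}) \vdash \J{4}_{+}$. The content of $\J{4}_{+}$ is a \emph{restricted} transitivity $C \rhd A \land \Box(A \to B) \to C \rhd B$, and the naive route---obtaining $A \rhd B$ from $\Box(A \to B)$ by $\J{1}$ and then composing with $C \rhd A$---requires full transitivity $\J{2}$, which is not available here. One is therefore forced to exploit the boxed-implication structure of the middle term: $\J{1}$ internalises boxed implications as $\rhd$-statements (and, via $C \rhd C$, supplies reflexivity), while $\J{4}$ reflects the fact that the $\rhd$-witnesses may be taken among the $R$-successors, which is exactly what is needed to turn an $A$-witness into a $B$-witness under $\Box(A \to B)$. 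The difficulty is that $\J{4}$ records this range constraint only through $\Diamond$, so the work lies in threading $\J{1}$, $\J{4}$ and $\J{3}$ together so as to recover the per-antecedent witnessing that $\J{4}_{+}$ demands without invoking $\J{2}$.
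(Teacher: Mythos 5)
Your items 1--4 and 6 are correct, and since the paper itself offers no proof of this Fact (it defers entirely to Kurahashi--Okawa~\cite{KO20}), those derivations are exactly what a proof must contain: the $\J{6}$-bridge $\Box(A\to B)\leftrightarrow(A\land\lnot B)\rhd\bot$ driving items 3 and 4, the combination of $C\rhd C$, $\J{3}$ and $\J{2}$ for item 2, and, for item 6, the two inclusions using items 2--4, the derivation of $\J{6}$ inside $\CL$ from $\J{1}$, $\J{4}$ and $\lnot\Diamond\bot$, and the admissibility of $\R{1}$, $\R{2}$ from $\J{1}$ and $\J{2}$.

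Item 5, however, is a genuine gap, and not a small one: you end with a diagnosis of why the obvious attempts fail rather than with a derivation, and in fact no amount of ``threading $\J{1}$, $\J{4}$ and $\J{3}$ together'' can close it, because $\J{4}_{+}$ is not derivable from the resources you list. To see this, consider the following generalized Veltman structure (it is not an $\IL^-\!$-frame, but it interprets the language): $W=\{x,y,z,w\}$, $R=\{(x,y),(x,z)\}$, $S_x=\{(y,\{y\}),\,(z,\{z\}),\,(y,\{z,w\})\}$, $S_y=S_z=S_w=\emptyset$, where $a\Vdash A\rhd B$ means that for every $b$ with $aRb$ and $b\Vdash A$ there is a pair $(b,V)\in S_a$ all of whose elements force $B$. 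Every axiom of $\IL^-$ is valid in this structure ($\J{6}$ because every witness set is nonempty), validity is preserved by Modus Ponens, Necessitation, $\R{1}$ and $\R{2}$, the scheme $\J{1}$ is valid thanks to the singleton witnesses, and the scheme $\J{4}$ is valid because every witness set meets $R[x]$; yet for $p$ true exactly at $y$, $q$ true exactly on $\{z,w\}$, and $r$ true exactly at $z$, one has $x\Vdash\Box(q\to r)$ and $x\Vdash p\rhd q$ (witness $\{z,w\}$) while $x\nVdash p\rhd r$, since neither $\{y\}$ nor $\{z,w\}$ lies inside the truth set of $r$. So every theorem of $\IL^-(\J{1},\J{4})$, as that calculus is defined in this paper, is valid in the structure, while the $\J{4}_{+}$ instance $\Box(q\to r)\to(p\rhd q\to p\rhd r)$ is not. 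The upshot is that your plan for item 5 cannot be completed as described: either the proof in~\cite{KO20} rests on a formulation of the axioms or rules that differs from the one reproduced here, or the quoted item is itself in error; in either case this is precisely the point that cannot be settled by blind reconstruction from this paper's definitions, and it deserves to be checked against~\cite{KO20} directly.
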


\begin{fact}\label{FC}
Let ${\mathcal F} = (W, R, \{S_{x}\}_{x \in W})$ be any $\IL^- \!$-frame. 
\begin{enumerate}
	\item ${\mathcal F} \models \J{1}$ $\iff$ $(\forall x, y \in W)(x R y \Rightarrow yS_{x}y)$.
	\item ${\mathcal F} \models \J{2}_{+}$ $\iff$ ${\mathcal F} \models \J{4}_{+} \, \& \, (\forall x, y, z, v \in W)(yS_x z \, \& \, zS_{x} v \Rightarrow y S_{x} v)$.  
	\item ${\mathcal F} \models \J{4}_{+}$ $\iff$ $(\forall x, y, z \in W)(y S_x z \Rightarrow x R z)$.
	\item ${\mathcal F} \models \J{5}$ $\iff$ $(\forall x, y, z \in W)(x R y \, \& \, y R z \Rightarrow yS_{x}z)$.
\end{enumerate}
\end{fact}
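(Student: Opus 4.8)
The four items are frame-correspondence statements, so the plan is to prove each as a biconditional by arguing its two directions separately, and to dispose of items (1), (3), (4) before (2), since the condition appearing in (3) is reused inside (2). For each ``frame condition $\Rightarrow$ validity'' direction I take an arbitrary $\IL^-\!$-model built on $\mathcal{F}$ together with an arbitrary world $x$, assume the antecedent of the relevant axiom at $x$, and then simply chase the two forcing clauses of Definition~\ref{Def:VM}, invoking the frame condition at the single point where a witness for the consequent is required. For the converse I argue by contraposition: given worlds witnessing the failure of the frame condition, I instantiate the schematic letters of the axiom by propositional variables and choose a valuation assigning to each variable a singleton (or empty) set of worlds placed at those witnesses, after which the falsification follows by reading off the forcing clauses. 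Throughout I use that $y S_x z$ forces $x R y$, since $S_x \subseteq R[x] \times W$ in any $\IL^-\!$-frame.

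For item (3), the condition ``$y S_x z \Rightarrow x R z$'' yields validity of $\J{4}_+$ because, after obtaining from $C \rhd A$ a witness $z$ with $y S_x z$ and $z \Vdash A$, the condition places $z$ in $R[x]$, so $\Box(A \to B)$ gives $z \Vdash B$. Conversely, from $x, y, z$ with $y S_x z$ but $\lnot x R z$ I set $A$ true only at $z$, $B$ false everywhere, and $C$ true only at $y$: then $\Box(A \to B)$ holds at $x$ vacuously (no $R$-successor of $x$ forces $A$), $C \rhd A$ holds via the witness $y \mapsto z$, but $C \rhd B$ fails since $B$ is nowhere forced; here $z \neq y$ is automatic because $x R y$ while $\lnot x R z$. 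Item (1) is similar but cleaner: for its converse, from $x, y$ with $x R y$ and $\lnot y S_x y$ I take $A$ and $B$ to be the same variable $p$ true only at $y$, so $\Box(A \to B)$ is trivially valid while $p \rhd p$ fails at $x$ because the unique $p$-world $y$ is not an $S_x$-successor of itself. Item (4) rests on the fact that $y \Vdash \Diamond A$ means $\exists z (y R z \land z \Vdash A)$: the forward direction is immediate from the condition, and for the converse, from $x, y, z$ with $x R y$, $y R z$, $\lnot y S_x z$ I make $A$ true only at $z$, whence $y \Vdash \Diamond A$ but no $S_x$-successor of $y$ forces $A$, so $\Diamond A \rhd A$ fails at $x$.

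Item (2) is the substantive one and I expect it to be the main obstacle, both because its right-hand side is a conjunction and because the soundness direction must use the two frame conditions in tandem. For ``$\Leftarrow$'', assuming $y S_x z \Rightarrow x R z$ and transitivity of each $S_x$, I take $x \Vdash A \rhd (B \lor C)$ and $x \Vdash B \rhd C$, pick $y$ with $x R y$ and $y \Vdash A$, and obtain $z$ with $y S_x z$ and $z \Vdash B \lor C$; if $z \Vdash C$ we are done, while if $z \Vdash B$ then $x R z$ (first condition) lets me apply $B \rhd C$ to get $v$ with $z S_x v$ and $v \Vdash C$, whereupon transitivity delivers $y S_x v$. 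For ``$\Rightarrow$'', that $\mathcal{F} \models \J{2}_+$ forces $\mathcal{F} \models \J{4}_+$ can be read off either from Fact~\ref{Fact:24}(3) together with soundness of $\IL^-\!$-frames, or directly: from $y S_x z$ with $\lnot x R z$ I set $A$ true only at $y$, $B$ true only at $z$, and $C$ false everywhere, so that $A \rhd (B \lor C)$ holds via $y \mapsto z$, $B \rhd C$ holds vacuously (since $B$ is forced only at $z \notin R[x]$), yet $A \rhd C$ fails. For the transitivity clause I take $x, y, z, v$ with $y S_x z$, $z S_x v$, and $\lnot y S_x v$, and set $A$, $B$, $C$ true respectively only at $y$, $z$, $v$; then $A \rhd (B \lor C)$ holds through $y \mapsto z$, $B \rhd C$ holds through $z \mapsto v$ (using $x R z$), and $A \rhd C$ fails precisely because $v$ is the unique $C$-world and $\lnot y S_x v$. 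The delicate point in these converse constructions is to confirm that the two positive premises genuinely hold, which amounts to ruling out the handful of coincidences among the witness worlds; in the transitivity construction each problematic identification (for instance $z = y$ or $z = v$) would contradict the standing hypothesis $\lnot y S_x v$, so the valuations are safe.
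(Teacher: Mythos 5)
Your proof is correct. The paper itself states Fact~\ref{FC} without proof (deferring to Kurahashi and Okawa~\cite{KO20}), but your method---chasing the forcing clauses for the condition-to-validity directions, and contraposition via singleton valuations for the validity-to-condition directions, handling item 3 before item 2 and checking the witness coincidences---is essentially the same technique the paper itself employs in its proof of the analogous Proposition~\ref{SFC} for simplified frames, so there is nothing to flag.
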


\begin{fact}\label{SoIC}
Let $L$ be one of logics appearing in Figure~\ref{Comp}. 
Then the following are equivalent: 
\begin{enumerate}
	\item $L \vdash A$. 
	\item $A$ is valid in any (finite) $\IL^- \!$-frame ${\mathcal F}$ in which all axioms of $L$ are valid. 
\end{enumerate}
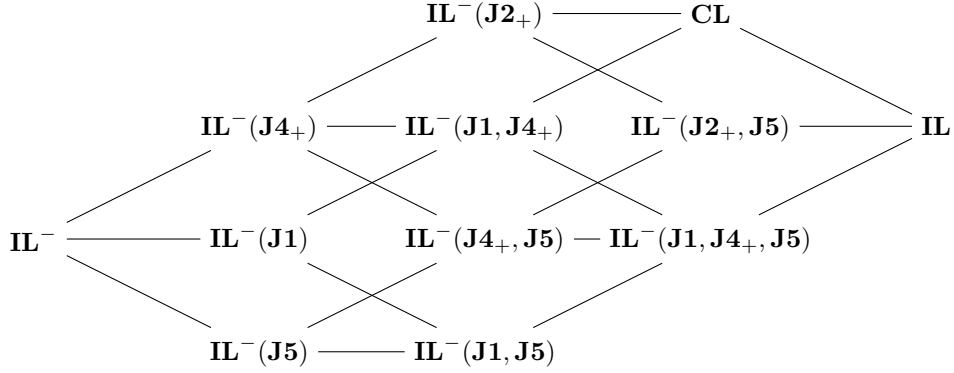
\begin{figure}[h]
\centering
\begin{tikzpicture}
\node (IL-) at (0,1.5) {$\IL^-$};
\node (IL5) at (3,0) {$\IL^-(\J{5})$};
\node (IL1) at (3,1.5) {$\IL^-(\J{1})$};
\node (IL4) at (3,3){$\IL^-(\J{4}_+)$};
\node (IL15) at (6,0){$\IL^-(\J{1}, \J{5})$};
\node (IL45) at (6,1.5){$\IL^-(\J{4}_+, \J{5})$};
\node (IL14) at (6,3){$\IL^-(\J{1}, \J{4}_+)$};
\node (IL2) at (6,4.5){$\IL^-(\J{2}_+)$};
\node (IL145) at (9,1.5){$\IL^-(\J{1}, \J{4}_+, \J{5})$};
\node (IL25) at (9,3){$\IL^-(\J{2}_+, \J{5})$};
\node (CL) at (9,4.5){$\CL$};
\node (IL) at (12,3){$\IL$};
\draw [-] (IL5)--(IL-);
\draw [-] (IL1)--(IL-);
\draw [-] (IL4)--(IL-);
\draw [-] (IL15)--(IL5);
\draw [-] (IL45)--(IL5);
\draw [-] (IL15)--(IL1);
\draw [-] (IL14)--(IL1);
\draw [-] (IL45)--(IL4);
\draw [-] (IL14)--(IL4);
\draw [-] (IL2)--(IL4);
\draw [-] (IL145)--(IL15);
\draw [-] (IL145)--(IL45);
\draw [-] (IL25)--(IL45);
\draw [-] (IL145)--(IL14);
\draw [-] (CL)--(IL14);
\draw [-] (IL25)--(IL2);
\draw [-] (CL)--(IL2);
\draw [-] (IL)--(IL145);
\draw [-] (IL)--(IL25);
\draw [-] (IL)--(CL);
\end{tikzpicture}
\caption{Sublogics of $\IL$ complete with respect to a corresponding class of $\IL^- \!$-frames}
\label{Comp}
\end{figure}
Here, the logics in Figure~\ref{Comp} extend as it goes to the right along each of lines. 
\end{fact}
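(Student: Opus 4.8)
The plan is to establish the two directions separately, leaning on the already-proved completeness of the base logic $\IL^-$ with respect to all finite $\IL^-$-frames and on the frame correspondences recorded in Fact~\ref{FC}. The labels in Figure~\ref{Comp} are exactly the logics obtained from $\IL^-$ by adding axioms drawn from $\{\J{1}, \J{2}_+, \J{4}_+, \J{5}\}$, so throughout I treat $L$ as $\IL^-$ together with such a finite set of extra axioms.

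\textbf{Soundness.} For $(1) \Rightarrow (2)$ I would argue by induction on the length of an $L$-proof of $A$. Since $\IL^-$ is sound with respect to every $\IL^-$-frame, the axioms $\G{0}$, $\G{1}$, $\G{2}$, $\J{3}$, $\J{6}$ are valid in $\mathcal{F}$ and the rules Modus Ponens, Necessitation, $\R{1}$, $\R{2}$ preserve validity in any $\IL^-$-frame. The only further axioms of $L$ are among $\J{1}, \J{2}_+, \J{4}_+, \J{5}$, and these are valid in $\mathcal{F}$ by the hypothesis of $(2)$. Hence every theorem of $L$ is valid in $\mathcal{F}$.

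\textbf{Completeness.} For $(2) \Rightarrow (1)$ I would prove the contrapositive: assuming $L \not\vdash A$, I construct a \emph{finite} $\IL^-$-frame validating all axioms of $L$ yet refuting $A$. The natural route is to re-run the finite completeness construction for $\IL^-$ of~\cite{KO20}, but relative to $L$: fix a finite adequate set containing $A$, take the worlds of $\mathcal{M}$ to be (maximal) $L$-consistent subsets of it, and define $R$ and each $S_x$ by the usual clauses phrased in terms of the $\Box$- and $\rhd$-formulas in that set. Because $\lnot A$ is $L$-consistent it sits in some world, and a truth lemma as in the $\IL^-$ case shows this world refutes $A$. Since the resulting model is finite, the same counterexample witnesses the failure of $(2)$ in its finite reading, so both versions of $(2)$ are equivalent to $(1)$.

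\textbf{Verifying the frame conditions.} The crux is to check that the frame underlying $\mathcal{M}$ validates each axiom of $L$. By Fact~\ref{FC} it suffices to verify, for each added axiom, the matching first-order condition on $R$ and $\{S_x\}$: $xRy \Rightarrow yS_x y$ for $\J{1}$; $yS_x z \Rightarrow xRz$ for $\J{4}_+$; transitivity of each $S_x$ together with the $\J{4}_+$ condition for $\J{2}_+$; and $xRy\ \&\ yRz \Rightarrow yS_x z$ for $\J{5}$. Each is to be derived from the presence of the corresponding axiom in $L$, using that the worlds are $L$-consistent and that $S_x$ is built from $\rhd$-statements, so the axiom forces the required closure. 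I expect this to be the main obstacle, especially the $\J{2}_+$ case, where one must show that genuine transitivity of $S_x$ (not merely validity of $\J{2}$) survives the finite construction, and that the conditions for the several axioms lying together along a branch of Figure~\ref{Comp} can be arranged to hold simultaneously in one and the same finite model.
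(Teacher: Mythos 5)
The paper never proves this statement: it is imported as a known result, with the proof explicitly delegated to Kurahashi and Okawa (``for a proof, see~\cite{KO20}''). So there is no internal argument to compare against, and your proposal has to stand on its own. The soundness half does: induction on the length of an $L$-proof, using that the $\IL^-$ axioms are valid in every $\IL^-\!$-frame, that Modus Ponens, Necessitation, $\R{1}$, and $\R{2}$ preserve \emph{frame} validity, and that the extra axioms of $L$ are valid in $\mathcal{F}$ by hypothesis, is exactly the standard argument and is correct as sketched.

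The completeness half, however, is a plan rather than a proof, and the step you defer is the entire content of the theorem. You propose to take maximal $L$-consistent subsets of a finite adequate set, define $R$ and the $S_x$ ``by the usual clauses'', prove a truth lemma ``as in the $\IL^-$ case'', and then verify the frame conditions of Fact~\ref{FC} --- and you close by conceding that this verification, ``especially the $\J{2}_{+}$ case,'' is the main obstacle. That obstacle is precisely what a proof must overcome, and it is not routine: with the natural clause-wise definition of $S_x$ from the $\rhd$-formulas, validity of $\J{2}$ at every world yields only a formula-relative composability of $S_x$, not literal transitivity of the relation; repairing this (say, by closing $S_x$ under composition) forces one to re-check the existence direction of the truth lemma, since the closure may create $S_x$-successors that are not witnessed by any $\rhd$-formula in $x$. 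Moreover the repair must be carried out compatibly for all axioms of $L$ at once: for $\IL^-(\J{1}, \J{4}_{+}, \J{5})$, for instance, one needs $xRy \Rightarrow yS_xy$, $\, yS_xz \Rightarrow xRz$, and $xRy \, \, \& \, \, yRz \Rightarrow yS_xz$ to hold simultaneously in one and the same finite model refuting $A$. This is exactly why~\cite{KO20} gives separate, tailored constructions for the different logics in Figure~\ref{Comp} rather than one uniform canonical-model argument, and it is also the kind of difficulty the present paper confronts concretely at the simplified-frame level (the chains $a_1 S_{v_1} a_2 S_{v_2} \cdots$ in the proof of Theorem~\ref{SV2} exist precisely to manufacture transitivity without breaking the truth lemma). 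As it stands, your proposal names the crux but does not resolve it, so it does not establish the Fact.
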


\begin{rem}
From Fact~\ref{Fact:24}, for any extension $L$ of $\IL^-(\J{1})$ closed under the rules of $\IL^-$, we always identify the axioms $\J{2}_{+}$ and $\J{4}_{+}$ with $\J{2}$ and $\J{4}$, respectively. 
Also, we always identify $\IL$ and $\CL$ with $\IL^-(\J{1}, \J{2}, \J{5})$ and $\IL^-(\J{1}, \J{2})$, respectively.  
\end{rem}

In the next section, we investigate the modal completeness of several sublogics of $\IL$ in Figure~\ref{Comp} with respect to Visser frames which were originally introduced by Visser~\cite{Vis88}. 
In this paper, we consider the following \textit{$\IL^-(\J{4}_{+})$-Visser frames} as the basis, which are general notion of Visser's Visser frames. 

\begin{defn}
A triple $(W, R, S)$ is said to be a \textit{$\IL^-(\J{4}_{+})$-Visser frame} if the following conditions hold: 
\begin{itemize}
	\item $W$ is a non-empty set; 
	\item $R$ is a transitive and conversely well-founded binary relation on $W$; 
	\item $S$ is a binary relation on $W$. 
\end{itemize}
\end{defn}

\begin{defn}\label{Def:SVM}
A quadruple $(W, R, S, \Vdash)$ is said to be a \textit{$\IL^- (\J{4}_{+})$-Visser model} if the following conditions hold: 
\begin{itemize}
	\item $(W, R, S)$ is an $\IL^- (\J{4}_{+})$-Visser frame; 
	\item As in Definition~\ref{Def:VM}, we define a satisfaction relation $\Vdash$ with the following clause:
\[
x \Vdash A \rhd B \iff (\forall y \in W)\bigl(x R y \, \& \, y \Vdash A \Rightarrow (\exists z \in W)(xRz \, \& \, ySz \, \& \, z \Vdash B)\bigr).
\]
\end{itemize}
For any modal formula $A$ and $\IL^-(\J{4}_{+})$-Visser frame ${\mathcal F}$, we define ${\mathcal F} \models A$ as in Definition~\ref{Def:VM}. 
\end{defn}

\begin{rem}\label{Rem:J4}
For an $\IL^- (\J{4}_{+})$-Visser model $(W, R, S, \Vdash)$, it may be natural to consider a satisfaction relation with the following clause:
\[
x \Vdash A \rhd B \iff (\forall y \in W)\bigl(x R y \, \& \, y \Vdash A \Rightarrow (\exists z \in W)(ySz \, \& \, z \Vdash B)\bigr).
\]
However, if we adopt this definition, then the persistency axiom 
\begin{description}
	\item[$\mathbf{P}$:] $A \rhd B \to \Box(A \rhd B)$
\end{description}
is valid in all Visser frames. 
In fact, the authors with Iwata proved in \cite[Theorem 4.13]{IKO22} that the logic $\IL^-(\mathbf{P})$ is sound and complete with respect to the class of all (finite) Visser frames in this definition. 
Therefore, in this paper, we adopt the definition in Definition~\ref{Def:SVM} which was originally adopted by Visser in~\cite{Vis88}. 
On the other hand, our definition causes that $\J{4}_{+}$  is valid in all $\IL^-(\J{4}_{+})$-Visser frames (see Proposition~\ref{SFC}.2). 
Therefore, we focus on and investigate extensions of $\IL^-(\J{4}_{+})$. 
\end{rem}

\begin{prop}\label{SFC}
Let ${\mathcal F} = (W, R, S)$ be any $\IL^-(\J{4}_{+})$-Visser frame. 
\begin{enumerate}
	\item ${\mathcal F} \models \J{3}$. 
	\item ${\mathcal F} \models \J{4}_{+}$. 
	\item ${\mathcal F} \models \J{6}$. 
	\item ${\mathcal F} \models \J{1}$ $\iff$ $(\forall x, y \in W)(x R y \Rightarrow ySy)$. 
	\item ${\mathcal F} \models \J{2}_{+}$ $\iff$ $(\forall x \in W)(\forall y, z, v \in R[x])(y S z \, \& \, z S v \Rightarrow y S v)$. 
	\item ${\mathcal F} \models \J{5}$ $\iff$ $(\forall x, y, z \in W)(x R y \, \& \, y R z \Rightarrow ySz)$. 
\end{enumerate}
\end{prop}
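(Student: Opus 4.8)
The plan is to treat the three unconditional items (1)--(3) and the three frame-correspondence biconditionals (4)--(6) separately. For items (1)--(3) I would argue directly from the forcing clause for $\rhd$ in Definition~\ref{Def:SVM}. For $\J{3}$, given $x \Vdash A \rhd C$ and $x \Vdash B \rhd C$, any $y$ with $xRy$ and $y \Vdash A \lor B$ forces one of $A$, $B$, and the corresponding hypothesis directly supplies a witness $z$ with $xRz$, $ySz$, $z \Vdash C$; hence $x \Vdash (A \lor B) \rhd C$. For $\J{6}$ I would prove both implications: if $x \Vdash \Box \lnot A$ then no $R$-successor of $x$ forces $A$, so $x \Vdash A \rhd \bot$ holds vacuously, and conversely a witness $z \Vdash \bot$ is impossible, forcing every $R$-successor to refute $A$. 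The key item here is $\J{4}_{+}$: given $x \Vdash \Box(A \to B)$ and $x \Vdash C \rhd A$, for any $y$ with $xRy$, $y \Vdash C$ the second hypothesis yields $z$ with $xRz$, $ySz$, $z \Vdash A$, and it is precisely the conjunct $xRz$ built into our forcing clause that lets me invoke $x \Vdash \Box(A \to B)$ to conclude $z \Vdash B$. This is the point anticipated in Remark~\ref{Rem:J4}.

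For the biconditionals (4)--(6), the soundness direction ($\Leftarrow$) is again a direct reading of the forcing clause: assuming the stated relational condition, I would check that the required witness is always available. For $\J{1}$, reflexivity $ySy$ together with $y \Vdash B$ lets $y$ witness itself; for $\J{5}$, the condition $xRy \,\&\, yRz \Rightarrow ySz$ together with transitivity of $R$ (giving $xRz$) makes the $\Diamond$-witness also an $S$-witness; for $\J{2}_{+}$, after unwinding $A \rhd (B \lor C)$ and $B \rhd C$ I would, in the non-trivial case where the first witness forces $B$ rather than $C$, land on points $y, z, v \in R[x]$ with $ySz$ and $zSv$, and close the argument using the in-$R[x]$ transitivity condition to obtain $ySv$.

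The substance of the proof is the completeness direction ($\Rightarrow$) of (4)--(6), which I would establish by contraposition: from a failure of the relational condition I build a simplified $\IL^-(\J{4}_{+})$-model refuting the axiom at the offending point $x$, where in each case the refuting valuation is obtained by forcing a single fresh propositional variable at one distinguished world. For $\J{1}$, taking a pair $xRy$ with $\lnot(ySy)$ and making $p$ true exactly at $y$ refutes $p \rhd p$ while $\Box(p \to p)$ holds trivially. For $\J{5}$, taking $xRy$, $yRz$ with $\lnot(ySz)$ and making $p$ true exactly at $z$ gives $y \Vdash \Diamond p$ but kills every $S$-witness for $p$ below $y$. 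The most delicate construction is $\J{2}_{+}$: from $y, z, v \in R[x]$ with $ySz$, $zSv$, $\lnot(ySv)$ I would use three distinct variables $p, q, r$ true exactly at $y$, $z$, $v$ respectively, so that $A \rhd (B \lor C)$ and $B \rhd C$ hold (witnessed by $z$ and $v$) while $A \rhd C$ fails at $x$. The main obstacle is bookkeeping the possible coincidences among $y, z, v$: the hypotheses force $y \neq z$ and $z \neq v$ (otherwise $ySv$ would follow, contradicting $\lnot(ySv)$), but $y = v$ can genuinely occur. This is exactly why I assign $A$, $B$, $C$ to three separate variables rather than partitioning the worlds, so that the construction remains correct even when $y$ and $v$ collapse to one world, since there $\lnot(ySy)$ still guarantees that $y$ is not its own $S$-successor and hence that $C$ still has no $S$-witness below $y$.
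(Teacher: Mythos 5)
Your proposal is correct and in substance identical to the paper's proof: the soundness ($\Leftarrow$) directions are the same direct verifications, and for the correspondence ($\Rightarrow$) directions you use exactly the same valuations (each variable true at a single designated world) and the same appeal to the forcing clause of Definition~\ref{Def:SVM}. The only difference is framing: the paper proves the $\Rightarrow$ directions of (4)--(6) directly (assume the axiom is valid, instantiate it under the crafted valuation, and read off the relational condition from the uniqueness of the world forcing the relevant variable) rather than by contraposition, and in that direct form the coincidence bookkeeping you carry out for $\J{2}_{+}$ (the cases $y=z$, $z=v$, $y=v$) never arises --- though your handling of it is correct, since any witness forcing $r$ must literally be the world $v$, whether or not it coincides with $y$.
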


\begin{proof}
1. Suppose $x \Vdash (A \rhd C) \land (B \rhd C)$. 
We show $x \Vdash (A \lor B) \rhd C$. 
Let $y \in W$ be such that $x R y$ and $y \Vdash A \lor B$. 
If $y \Vdash A$, then there exists $z \in W$ such that $x R z$, $y S z$, and $z \Vdash C$ by $x \Vdash A \rhd C$. 
If $y \Vdash B$, then there exists $z \in W$ such that $x R z$, $y S z$, and $z \Vdash C$ by $x \Vdash B \rhd C$. 
In any case, we obtain $x \Vdash (A \lor B) \rhd C$. 

\medskip

2. Suppose $x \Vdash \Box (A \to B) \land (C \rhd A)$. 
We show $x \Vdash C \rhd B$. 
Let $y \in W$ be such that $x R y$ and $y \Vdash C$. 
Since $x \Vdash C \rhd A$, there exists $z \in W$ such that $x R z$, $y S z$, and $z \Vdash A$. 
Since $x \Vdash \Box (A \to B)$, we have $z \Vdash B$. 
Therefore, $x \Vdash C \rhd B$. 

\medskip

3. For $x \in W$, 
\begin{align*}
    x \Vdash \Box \lnot A & \iff (\forall y \in W) (x R y \Rightarrow  y \Vdash \neg A) \\
    & \iff (\forall y \in W) (x R y \ \&\ y \Vdash A \Rightarrow (\exists z \in W) (x R z\ \&\ y S z\ \&\  z \Vdash \bot)) \\
    & \iff x \Vdash A \rhd \bot. 
\end{align*}

\medskip

4. ($\Rightarrow$): 
Suppose $x R y$. 
Let $(W, R, S, \Vdash)$ be an $\IL^-(\J{4}_{+})$-Visser model such that  $a \Vdash p \iff a = y$ and $a \Vdash q \iff a = y$. 
Then, we have $x \Vdash \Box(p \to q)$. 
Thus, $x \Vdash p \rhd q$ by $\J{1}$. 
Since $x R y$ and $y \Vdash p$, there exists $y' \in W$ such that $x R y'$, $y S y'$, and $y' \Vdash q$. 
Then, $y' = y$, and hence $y S y$. 

($\Leftarrow$): Suppose $x \Vdash \Box(A \to B)$. 
We show $x \Vdash A \rhd B$. 
Let $y \in W$ be with $x R y$ and $y \Vdash A$. 
Since $y \Vdash B$ and $y S y$, we obtain $x \Vdash A \rhd B$. 

\medskip

5. ($\Rightarrow$): 
Suppose $y, z, v \in R[x]$, $y S z$, and $z S v$. 
Let $(W, R, S, \Vdash)$ be an $\IL^-(\J{4}_{+})$-Visser model such that $a \Vdash p \iff a = y$, $a \Vdash q \iff a = z$, and $a \Vdash r \iff a = v$. 
To show $x \Vdash p \rhd (q \lor r)$, let $y' \in W$ be with $x R y'$ and $y' \Vdash p$. 
Then, $y' = y$. 
Since $x R z$, $y S z$, and $z \Vdash q \lor r$, we have $x \Vdash p \rhd (q \lor r)$. 
To show $x \Vdash q \rhd r$, let $z' \in W$ be with $x R z'$ and $z' \Vdash q$. 
Then, $z' = z$. 
Since $x R v$, $z S v$, and $v \Vdash r$, we have $x \Vdash q \rhd r$. 
Therefore, by $\J{2}_+$, we get $x \Vdash p \rhd r$. 
Since $x R y$ and $y \Vdash p$, there exists $v' \in W$ such that $x R v'$, $y S v'$, and $v' \Vdash r$. 
Since $v' = v$, we conclude $y S v$. 

($\Leftarrow$): 
Suppose $x \Vdash A \rhd (B \lor C)$ and $x \Vdash B \rhd C$. 
We show $x \Vdash A \rhd C$. 
Let $y \in W$ be such that $x R y$ and $y \Vdash A$. 
Since $x \Vdash A \rhd (B \lor C)$, there exists $z \in W$ such that $x R z$, $y S z$, and $z \Vdash B \lor C$. 
If $z \Vdash C$, then we are done. 
Assume $z \Vdash B$. 
Since $x \Vdash B \rhd C$ and $x R z$, there exists $v \in W$ such that $x R v$, $z S v$, and $v \Vdash C$. 
Since $y, z, v \in R[x]$, $y S z$, and $z S v$, we obtain $y S v$. 
Therefore, we obtain $x \Vdash A \rhd C$. 

\medskip

6. ($\Rightarrow$): Suppose $x R y$ and $y R z$. 
Let $(W, R, S, \Vdash)$ be an $\IL^-(\J{4}_{+})$-Visser model such that $a \Vdash p \iff a = z$. 
Then, we have $y \Vdash \Diamond p$. 
Since $x \Vdash \Diamond p \rhd p$ and $x R y$, there exists $z' \in W$ such that $x R z'$, $y S z'$, and $z' \Vdash p$. 
Then, $z' = z$, and hence $y S z$. 

($\Leftarrow$): Let $x \in W$. 
We show $x \Vdash \Diamond A \rhd A$. 
Let $y \in W$ be such that $x R y$ and $y \Vdash \Diamond A$. 
Then, there exists $z \in W$ such that $y R z$ and $z \Vdash A$. 
Since $x R y$ and $y R z$, we have $x R z$ and $y S z$. 
Therefore, we conclude $x \Vdash \Diamond A \rhd A$. 
\end{proof}

\begin{defn}\label{L-frame}
Let $L$ be one of extensions of $\IL^-(\J{4}_{+})$ appearing in Figure~\ref{Comp}, and let $\mathcal{F} = (W, R, S)$ be any $\IL^-(\J{4}_{+})$-Visser frame. 
We say that \textit{$\mathcal{F}$ is a $L$-Visser frame} if the following conditions hold:
\begin{itemize}
	\item If $\J{1}$ is an axiom of $L$, then $S$ is reflexive;
	\item If $\J{2}_{+}$ is an axiom of $L$, then $S$ is transitive;
	\item If $\J{5}$ is an axiom of $L$, then $R \subseteq S$.
\end{itemize}
\end{defn}

Proposition \ref{SFC} shows that each logic $L$ as in Definition \ref{L-frame} is sound with respect to all $L$-Visser frames, that is, every $L$-Visser frame validates all axioms of $L$. 
Conversely, the modal completeness of the logics $\IL$ and $\CL$ with respect to corresponding Visser frames is already known. 
On the other hand, Visser~\cite{Vis97} proved that $\IL$ does not have finite frame property. 

\begin{fact}\label{Fact:SComp}
Let $A$ be any modal formula. 
\begin{enumerate}
	\item \textup{(Visser~\cite[pp.~18--21]{Vis88})} $\IL \vdash A \iff$ $A$ is valid in all $\IL$-Visser frames. 
	\item \textup{(Visser~\cite[pp.~328--329]{Vis97})} $\IL$ does not have finite frame property with respect to $\IL$-Visser frames. 
	\item \textup{(Ignatiev~\cite[Theorem 5]{Ign91})} $\CL \vdash A \iff$ $A$ is valid in all $\CL$-Visser frames. 
\end{enumerate}
\end{fact}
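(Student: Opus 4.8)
The plan is to treat the three parts separately, since soundness is essentially already in hand while the completeness statements (parts~1 and~3) and the failure of the finite model property (part~2) carry the real content.

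For the soundness directions of parts~1 and~3 (the implication $L \vdash A \Rightarrow A$ is valid in all simplified $L$-frames, for $L \in \{\IL, \CL\}$), I would argue axiom by axiom. Proposition~\ref{SFC} already gives that $\J{3}$, $\J{4}_{+}$, and $\J{6}$ are valid in \emph{every} simplified $\IL^-(\J{4}_{+})$-frame, and it converts $\J{1}$, $\J{2}_{+}$, $\J{5}$ into the frame conditions reflexivity of $S$, transitivity of $S$, and $R \subseteq S$ respectively. Since a simplified $\IL$-frame is by definition required to satisfy all three of these conditions and a simplified $\CL$-frame the first two, the corresponding axioms are valid on them. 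The axioms $\G{0}$, $\G{1}$ are standard, and $\G{2}$ (Löb) is validated by the conversely well-founded $R$ exactly as in $\GL$; that Modus Ponens, Necessitation, $\R{1}$, and $\R{2}$ preserve validity is routine. This settles both $\Rightarrow$ directions.

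For the completeness directions I would argue contrapositively. Suppose $\IL \not\vdash A$ (resp.\ $\CL \not\vdash A$). By Fact~\ref{SoIC} there is a finite ordinary Veltman countermodel $(W, R, \{S_{x}\}_{x \in W}, \Vdash)$ refuting $A$ at some point, whose frame validates the axioms of $L$. The task is to replace the \emph{indexed} family $\{S_{x}\}_{x \in W}$ by a single global relation $S$ without disturbing the truth values of subformulas of $A$. The natural move is to duplicate each world into copies tagged by enough of their $R$-access history to recover the correct $S_{x}$ locally, to declare $ySz$ exactly when the tag of $y$ licenses the original $S_{x}$-step, and then to close $S$ under the conditions demanded of a simplified $L$-frame (reflexivity, transitivity, and, in the $\IL$-case, $R \subseteq S$). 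One then proves a truth lemma showing each copy forces the same formulas as its original, so $A$ still fails. The delicate point, which I expect to be the main obstacle, is that closing the global $S$ under transitivity (and $R \subseteq S$) while keeping the local $\rhd$-witnesses correct and $R$ conversely well-founded may force the duplication to continue without bound; indeed part~2 shows this blow-up is unavoidable, so the construction must be allowed to yield an infinite simplified model and its coherence is the crux to get right.

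For part~2, the failure of the finite model property for $\IL$, the plan is to exhibit a single formula $A_{0}$ with $\IL \not\vdash A_{0}$ that is nevertheless valid in every \emph{finite} simplified $\IL$-frame. One designs $A_{0}$ so that any simplified $\IL$-model refuting it is compelled, through the interplay of reflexivity and transitivity of $S$, the inclusion $R \subseteq S$, and a $\rhd$-premise that keeps manufacturing fresh $S$-successors, to contain an infinite $S$-ascending chain of pairwise $S$-inequivalent worlds, which cannot sit inside a finite frame; hence $A_{0}$ is valid on every finite simplified $\IL$-frame. Combined with the completeness of part~1, which furnishes a refuting (necessarily infinite) simplified $\IL$-model for $A_{0}$, this separates validity on all simplified frames from validity on finite ones. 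The hard part here is the actual design of $A_{0}$ together with the simultaneous verification of its $\IL$-consistency and its forced-infinity property; the remainder of part~2 is then bookkeeping.
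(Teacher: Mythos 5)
First, a point of order: the paper does not prove Fact~\ref{Fact:SComp} at all --- it is quoted from the literature (Visser~\cite{Vis88} and \cite{Vis97} for parts 1 and 2, Ignatiev~\cite{Ign91} for part 3), so there is no internal proof to compare against, and your attempt must be judged on its own merits. On those merits, the soundness halves of parts 1 and 3 are essentially right: Proposition~\ref{SFC} together with Fact~\ref{Fact:24} (to pass from $\J{2}_{+}$, $\J{4}_{+}$ to $\J{2}$, $\J{4}$) and the standard L\"ob argument from converse well-foundedness of $R$ do settle them. But everywhere the real content of the Fact lies, your proposal only points at the difficulty instead of resolving it, and in one place the method you describe would fail as stated.

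Concretely, for the completeness halves you propose to tag worlds with their $R$-access history, read off $S$ from the local $S_{x}$, \emph{and then close} $S$ under reflexivity, transitivity, and $R \subseteq S$. Taking the closure after the fact is the step that breaks: every pair added to $S$ by the closure is a new potential witness for some $y \mathrel{S} z$, and the truth lemma has no way to convert such a pair back into information about the original model unless the closure properties hold \emph{by construction}. This is exactly how the paper handles the analogous theorems it does prove: in Theorem~\ref{SV2} the relation $S'$ is defined outright as the existence of a chain of $S_{v_{i}}$-steps with all $v_{i} \in \mathrm{Setm}(\mathbf{y}-1, \mathbf{x}-1)$, so transitivity is \emph{proved} (the long case analysis on $\mathbf{x}_{1}$, $\mathbf{x}_{2}$) rather than imposed, and the truth lemma uses validity of $\J{2}_{+}$ in the original model to collapse a chain into a single $S_{\mathbf{x}^{e}}$-step; in Theorem~\ref{SVIL} (the case with $\J{5}$, where frames must be allowed to be infinite) $S'$ is defined by the structural condition $R(\mathbf{y}) \subseteq R(\mathbf{z})$ and $\abs{\mathbf{y}} < \abs{\mathbf{z}}$, which is transitive and contains $R'$ by inspection. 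Your sketch would have to be reworked into a definition of this kind before a truth lemma can even be formulated. For part 2 the gap is total: the entire theorem of Visser~\cite{Vis97} consists in exhibiting a concrete formula $A_{0}$ with $\IL \nvdash A_{0}$ that is valid in every finite simplified $\IL$-frame, and verifying both properties; saying that such a formula ``must be designed'' so that refuting models are forced to be infinite is a restatement of the theorem, not a proof of it.
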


Our main purpose of this paper is to prove the finite frame property for $\CL$ with respect to Visser frames (Theorem~\ref{SV2}).

\section{Modal completeness and finite frame property}

In this section, we study modal completeness of each extension of $\IL^-(\J{4}_{+})$ with respect to Visser frames. 
Among other things, we prove that the logic $\CL$ has finite frame property. 

\subsection{The finite frame property of $\IL^-(\J{4}_{+})$, $\IL^-(\J{1}, \J{4}_{+})$, $\IL^-(\J{4}_{+}, \J{5})$, and $\IL^-(\J{1}, \J{4}_{+}, \J{5})$}

Here we investigate the logics $\IL^-(\J{4}_{+})$, $\IL^-(\J{1}, \J{4}_{+})$, $\IL^-(\J{4}_{+}, \J{5})$, and $\IL^-(\J{1}, \J{4}_{+}, \J{5})$. 

\begin{thm}\label{SV}
Let $L$ be either $\IL^-(\J{4}_{+})$, $\IL^-(\J{1}, \J{4}_{+})$, $\IL^-(\J{4}_{+}, \J{5})$, or $\IL^-(\J{1}, \J{4}_{+}, \J{5})$.
For any modal formula $A$, the following are equivalent:
\begin{enumerate}
	\item $L \vdash A$. 
	\item $A$ is valid in all (finite) $L$-Visser frames.
\end{enumerate}
\end{thm}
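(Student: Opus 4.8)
The plan is to prove the two directions separately: soundness $(1 \Rightarrow 2)$ by verifying axioms and rules, and completeness together with the finite model property $(2 \Rightarrow 1)$ by an explicit finite model construction.

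For soundness I would check that each axiom of $L$ is valid in every simplified $L$-frame and that the rules preserve validity. The modal base $\G{0}$, $\G{1}$, $\G{2}$ and the rules Modus Ponens and Necessitation are handled exactly as in provability logic, using only that $R$ is transitive and conversely well-founded. By Proposition~\ref{SFC}.1--3, the axioms $\J{3}$, $\J{4}_{+}$, $\J{6}$ are valid in \emph{every} simplified $\IL^-(\J{4}_{+})$-frame, so they need no frame condition. Proposition~\ref{SFC}.4 and~\ref{SFC}.6 show that reflexivity of $S$ validates $\J{1}$ and that $R \subseteq S$ validates $\J{5}$; these are precisely the conditions imposed in the definition of a simplified $L$-frame when $\J{1}$, respectively $\J{5}$, is an axiom of $L$. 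Finally, preservation under $\R{1}$ and $\R{2}$ is a one-line check against the $\rhd$-clause of Definition~\ref{Def:SVM}. Hence every theorem of $L$ is valid in all simplified $L$-frames.

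For the converse I would prove the finite model property directly. Assuming $L \nvdash A$, I fix a finite set $\Phi \supseteq \Sub(A)$ closed under subformulas and single negations, and let $W$ be the (finite) set of maximal $L$-consistent subsets of $\Phi$. The relation $R$ is given by the standard $\GL$-style recipe, so that it is transitive and, since $W$ is finite, conversely well-founded; the forcing of propositional variables is defined by membership, aiming at a Truth Lemma. The crux is the single relation $S$, which I would define by a condition on the $\rhd$- and $\Box$-formulas occurring in $y$ and $z$, designed so that the only nontrivial clause of the Truth Lemma, namely $x \Vdash B \rhd C \iff (B \rhd C) \in x$, goes through. Concretely this clause splits into a witness-existence lemma — if $(B \rhd C) \in x$ and $x R y$ with $B \in y$, produce $z$ with $xRz$, $ySz$, $C \in z$ — and a blocking lemma — if $(B \rhd C) \notin x$, produce $y$ with $xRy$, $B \in y$ having no such $S$-witness among the $R$-successors of $x$. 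Each rests on an existence lemma for maximal $L$-consistent sets analogous to the one used for ordinary $\IL^-$-frames in~\cite{KO20}. Once $S$ is fixed I verify, via Proposition~\ref{SFC}, that $S$ is reflexive whenever $\J{1} \in L$ and that $R \subseteq S$ whenever $\J{5} \in L$, so that $(W,R,S)$ is genuinely a simplified $L$-frame; since some world contains $\lnot A$, that world refutes $A$.

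I expect the main obstacle to be the definition and analysis of $S$. Unlike in ordinary Veltman frames, where each $x$ carries its own relation $S_{x}$, here a single global $S$ must simultaneously supply the witnesses demanded by every $(B \rhd C) \in x$ and yet refrain from creating spurious witnesses that would wrongly validate some $(B \rhd C) \notin x$, all relative to the extra requirement $xRz$ baked into Definition~\ref{Def:SVM}. Balancing these two demands against the structural conditions (reflexivity, $R \subseteq S$) is the delicate point. What makes the present four logics tractable — and separates this theorem from the corresponding result for $\CL$ in Theorem~\ref{SV2} — is that none of them contains $\J{2}_{+}$, so $S$ need not be transitive; I therefore avoid closing $S$ under composition, which is exactly the step that would risk manufacturing unwanted witnesses.
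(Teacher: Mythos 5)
Your soundness direction matches the paper's: it is exactly the appeal to Proposition~\ref{SFC} plus the observation that the rules preserve validity. The problem is the completeness direction, where what you give is a plan rather than a proof: the single object on which everything hinges, the global relation $S$ on the canonical worlds, is never defined. You correctly identify the central tension --- one $S$ must supply witnesses for every $(B \rhd C) \in x$ while not creating spurious witnesses for any $(B \rhd C) \notin x'$, uniformly over all $x, x'$ --- but you do not resolve it, and on the raw finite canonical model of maximal $L$-consistent subsets of $\Phi$ it is a genuine obstruction, not a technicality. A world $y$ can be an $R$-successor of two worlds $x_1, x_2$ with $(B \rhd C) \in x_1$ and $(B \rhd C) \notin x_2$; the witness $z$ you must provide for $x_1$ (with $x_1 R z$ and $y S z$) may happen to satisfy $x_2 R z$ and $C \in z$ as well, destroying the blocking you need for $x_2$. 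Since $S$ is global, you cannot make the witness relation depend on which predecessor you are serving. Without a mechanism that separates the roles a single world plays relative to different predecessors, the Truth Lemma's two halves cannot both be made to go through, and your sketch contains no such mechanism.

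The paper dissolves this difficulty in a different way, which you might compare with your plan. It does not build a canonical model at all: it invokes the already-known completeness of $L$ with respect to finite $\IL^-\!$-frames (Fact~\ref{SoIC}) to obtain a finite Veltman countermodel $(W, R, \{S_x\}_{x \in W}, \Vdash)$, and then unravels it into the tree $W'$ of $R$-increasing sequences. Each node of the tree has a unique predecessor history, so the single relation $S'$ can be defined by consulting that history ($\mathbf{y}\, S'\, \mathbf{z}$ holds when $\mathbf{y}^e\, S_v\, \mathbf{z}^e$ for some $v$ drawn from the part of $\mathbf{y}$'s history above the common ancestor of $\mathbf{y}$ and $\mathbf{z}$), and the conflict between ``witnessing for $x_1$'' and ``blocking for $x_2$'' never arises because the copies of $y$ below $x_1$ and below $x_2$ are distinct worlds. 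If you want to salvage your direct approach, you would have to build this unravelling (or some equivalent world-duplication) into the canonical construction, at which point you are essentially reproving Fact~\ref{SoIC} and then performing the paper's transformation anyway. One further small point: your closing remark suggests that the presence of $\J{2}_{+}$ is what separates these four logics from $\CL$; but Theorem~\ref{SV2} shows $\CL$ and $\IL^-(\J{2}_{+})$ also enjoy the finite model property --- transitivity of $S'$ only forces a more elaborate definition (chains of $S_{v_i}$-steps), it does not break finiteness. What fails is the combination of $\J{2}_{+}$ with $\J{5}$.
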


\begin{proof}
($1 \Rightarrow 2$): Let $\mathcal{F}$ be any $L$-Visser frame. 
By the definition of $\mathcal{F}$ and Proposition~\ref{SFC}, all axioms of $L$ are valid in $\mathcal{F}$. 
Also, the validity of formulas in $\mathcal{F}$ is preserved by every rule of $\IL^-$.

($2 \Rightarrow 1$): Suppose $L \nvdash A$. 
Then, there exist an $\IL^-\!$-model $(W, R, \{S_{x}\}_{x \in W}, \mathord \Vdash)$ and $w \in W$ such that $w \nVdash A$ and all axioms of $L$ are valid in $(W, R, \{S_{x}\}_{x \in W})$ by Fact~\ref{SoIC}. 
We define the following set $W'$ of sequences of elements of $W$. 
\[
W' := \{\seq{x_{0},\ldots,x_{n}} \mid n \geq 0, \, x_0, \ldots, x_n \in W, \, \text{and} \,  \forall j < n\, (x_{j} R x_{j+1})\}.
\]
Since $R$ is transitive and irreflexive, the length of each sequence of $W'$ is less than or equal to the number of elements of $W$. 
Therefore, $W'$ is a finite set. 

Let $\varepsilon$ denote the empty sequence, and let $\mathbf{x}, \mathbf{y}, \mathbf{z}, \ldots$ denote elements of $W' \cup \{\varepsilon\}$.

\begin{itemize}
 	\item $\mathbf{x} \sqsubseteq \mathbf{y} :\iff$ $\mathbf{x}$ is an initial segment of $\mathbf{y}$. 
	\item $\mathbf{x} \sqsubset \mathbf{y} :\iff$ $\mathbf{x} \sqsubseteq \mathbf{y}$ and $\mathbf{x} \neq \mathbf{y}$. 
	\item $\mathbf{y} \cap \mathbf{z}$ is the longest common initial segment of $\mathbf{y}$ and $\mathbf{z}$. 
    That is, $\mathbf{y} \cap \mathbf{z}$ is the maximum element of $\{\mathbf{w} \in W' \cup \{\varepsilon\} \mid \mathbf{w} \sqsubseteq \mathbf{y} \, \& \, \mathbf{w} \sqsubseteq \mathbf{z}\}$ with respect to $\sqsubseteq$. 
\end{itemize}

Let $\mathbf{x} = \seq{x_{0},\ldots,x_{n}}$ be an element of $W'$. 
\begin{itemize}
	\item $\pre(\mathbf{x}) := \left\{
\begin{array}{ll}
\seq{x_{0},\ldots,x_{n-1}} & \text{if}\ n \geq 1,\\
\varepsilon & \text{if}\ n = 0.
\end{array}
\right.
$
	\item $\mathbf{x}^{e} := x_{n}$. 
	\item For each $x \in W$, let $\mathbf{x} * \seq{x} := \seq{x_{0},\ldots,x_{n}, x}$. 
\end{itemize}


Then, we define the binary relations $R'$ and $S'$ on $W'$ and the satisfaction relation $\Vdash'$ as follows:
\begin{itemize}
	\item $\mathbf{x} R' \mathbf{y} :\iff \mathbf{x} \sqsubset \mathbf{y}$; 
	\item $\mathbf{y} S' \mathbf{z} :\iff$ For $\mathbf{x} := \pre(\mathbf{y}) \cap \pre(\mathbf{z}) \in W' \cup \{\varepsilon\}$, either ($\mathbf{x} = \varepsilon$ and $\mathbf{y} \sqsubseteq \mathbf{z}$) or ($\mathbf{x} \neq \varepsilon$ and $\mathbf{y}^e S_{\mathbf{x}^e} \mathbf{z}^e$). 
	\item $\mathbf{x} \Vdash' p :\iff \mathbf{x}^e \Vdash p$. 
\end{itemize}
Since $R'$ is obviously transitive and irreflexive, $\mathcal{F} = (W', R', S')$ is a finite $\IL^-(\J{4}_{+})$-Visser frame. 
We show that $\mathcal{F}$ is a finite $L$-Visser frame. 
\begin{itemize}
	\item The case that $\J{1}$ is an axiom of $L$: we show that $S'$ is reflexive. 
For any $\mathbf{y} \in W'$, let $\mathbf{x} : = \pre(\mathbf{y}) = \pre(\mathbf{y}) \cap \pre(\mathbf{y})$. 
If $\mathbf{x} = \varepsilon$, then we obtain $\mathbf{y} S' \mathbf{y}$ since $\mathbf{y} \sqsubseteq \mathbf{y}$.  
If $\mathbf{x} \neq \varepsilon$, then ${\mathbf{x}}^{e} R \mathbf{y}^{e}$. 
We have $\mathbf{y}^{e} S_{{\mathbf{x}}^{e}} \mathbf{y}^{e}$ by Fact~\ref{FC}.1. 
Therefore, we obtain $\mathbf{y} S' \mathbf{y}$.

	\item The case that $\J{5}$ is an axiom of $L$: we show that $R' \subseteq S'$. 
Suppose $\mathbf{y} R' \mathbf{z}$. 
Let $\mathbf{x} : = \pre(\mathbf{y}) \cap \pre(\mathbf{z})$. 
Since $\pre(\mathbf{y}) \sqsubset \pre(\mathbf{z})$, we have $\mathbf{x} = \pre(\mathbf{y})$. 
If $\mathbf{x} = \varepsilon$, then we obtain $\mathbf{y} S' \mathbf{z}$ because $\mathbf{y} \sqsubseteq \mathbf{z}$.  
If $\mathbf{x} \neq \varepsilon$, then we have ${\mathbf{x}}^{e} R \mathbf{y}^{e}$ and $\mathbf{y}^{e} R \mathbf{z}^{e}$. 
By Fact~\ref{FC}.4, we get $\mathbf{y}^{e} S_{{\mathbf{x}}^{e}} \mathbf{z}^{e}$. 
Therefore, we conclude $\mathbf{y} S' \mathbf{z}$. 
\end{itemize}

Following claim is a key of the proof. 

\begin{cl}\label{cl1}
For any modal formula $D$ and $\mathbf{x} \in W'$, 
\[
\mathbf{x} \Vdash' D \iff \mathbf{x}^e \Vdash D. 
\]
\end{cl}

\begin{proof}
We prove the claim by induction on the construction of $D$. 
We only prove the case $D \equiv B \rhd C$. 

\medskip

($\Rightarrow$): 
Assume $\mathbf{x} \Vdash' B \rhd C$. 
We show $\mathbf{x}^e \Vdash B \rhd C$. 
Let $y \in W$ be such that $\mathbf{x}^e R y$ and $y \Vdash B$. 
Let $\mathbf{y}:= \mathbf{x} * \seq{y}$. 
Then, we have $\mathbf{y} \in W'$ and $\mathbf{x} R' \mathbf{y}$. 
Since $y = \mathbf{y}^e$, by the induction hypothesis, $\mathbf{y} \Vdash' B$. 
By our assumption, there exists $\mathbf{z} \in W'$ such that $\mathbf{x} R' \mathbf{z}$, $\mathbf{y} S' \mathbf{z}$, and $\mathbf{z} \Vdash' C$. 
Then, we get $\pre(\mathbf{y}) \cap \pre(\mathbf{z}) = \mathbf{x} \neq \varepsilon$ because $\mathbf{x} = \pre(\mathbf{y})$ and $\mathbf{x} \sqsubseteq \pre(\mathbf{z})$. 
Then, $\mathbf{y}^e S_{\mathbf{x}^e} \mathbf{z}^e$ follows from $\mathbf{y} S' \mathbf{z}$ by the definition of $S'$. 
By the induction hypothesis, we have $\mathbf{z}^e \Vdash C$, and hence we obtain $\mathbf{x}^e \Vdash B \rhd C$. 

\medskip

($\Leftarrow$): 
Assume $\mathbf{x}^e \Vdash B \rhd C$. 
We show $\mathbf{x} \Vdash' B \rhd C$. 
Let $\mathbf{y} \in W'$ be such that $\mathbf{x} R' \mathbf{y}$ and $\mathbf{y} \Vdash' B$. 
Then, we have $\mathbf{x}^e R \mathbf{y}^e$ and $\mathbf{y}^e \Vdash B$ by the induction hypothesis. 
By our assumption, there exists $z \in W$ such that $\mathbf{y}^e S_{\mathbf{x}^e} z$ and $z \Vdash C$. 
Let $\mathbf{z} := \mathbf{x}*\seq{z}$. 
By Fact~\ref{FC}.3, $\mathbf{x}^{e} R z$, and hence we have $\mathbf{z} \in W'$ and $\mathbf{x} R' \mathbf{z}$. 
Then, $\pre(\mathbf{y}) \cap \pre(\mathbf{z}) = \mathbf{x} \neq \varepsilon$ because $\mathbf{x} = \pre(\mathbf{z})$ and $\mathbf{x} \sqsubseteq \pre(\mathbf{y})$. 
Therefore, we obtain $\mathbf{y} S' \mathbf{z}$. 
Since $\mathbf{z} \Vdash C$ by the induction hypothesis, we conclude $\mathbf{x} \Vdash' B \rhd C$. 
\end{proof}

Since $w \nVdash A$, we obtain $\seq{w} \nVdash' A$ by the claim.  
Thus, $A$ is not valid in $\mathcal{F}$. 
\end{proof}

\subsection{The finite frame property of $\IL^-(\J{2}_{+})$ and $\CL$}

In this subsection, we show that the logics $\IL^-(\J{2}_{+})$ and $\CL$ have finite frame property with respect to Visser frames. 
This answers Problem~\ref{P1} affirmatively. 
We achieved this by constructing a finite Visser frame similar to the one we constructed in the proof of Theorem \ref{SV}, unlike the method of proof due to Ignatiev. 
However, we need to modify the construction of a frame given in the proof of Theorem \ref{SV} because the transitivity of $S'$ is not guaranteed as it is.

\begin{thm}\label{SV2}
Let $L$ be either $\IL^-(\J{2}_{+})$ or $\CL$. 
For any modal formula $A$, the following are equivalent:
\begin{enumerate}
	\item $L \vdash A$. 
	\item $A$ is valid in all (finite) $L$-Visser frames. 
\end{enumerate}
\end{thm}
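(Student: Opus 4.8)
The plan is to follow the overall strategy of the proof of Theorem~\ref{SV}, adapting the sequence construction so that the additional frame condition forced by $\J{2}_{+}$ is met. For the direction $1 \Longrightarrow 2$, soundness is immediate: by Proposition~\ref{SFC} together with the definition of a simplified $L$-frame, every axiom of $L$ is valid in any simplified $L$-frame (for $\J{2}_{+}$ we use the transitivity of $S$ via Proposition~\ref{SFC}.5, and for $\CL = \IL^-(\J{1}, \J{2}_{+})$ also the reflexivity of $S$ via Proposition~\ref{SFC}.4), and validity is preserved under the rules of $\IL^-$.

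For $2 \Longrightarrow 1$, assume $L \nvdash A$. By Fact~\ref{SoIC} there are a finite $\IL^-\!$-model $(W, R, \{S_{x}\}_{x \in W}, \Vdash)$ and $w \in W$ with $w \nVdash A$ in which all axioms of $L$ are valid; by Fact~\ref{FC}.2 each $S_{x}$ is then transitive (and, for $\CL$, reflexive on $R[x]$ by Fact~\ref{FC}.1). As in Theorem~\ref{SV} I would pass to the finite set $W'$ of $R$-chains, with $R'$ the proper-initial-segment relation and $\Vdash'$ defined pointwise on the last coordinate, so that the truth lemma (the analogue of Claim~\ref{cl1}) is what ultimately has to be re-proved. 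The reflexivity clause for $\CL$ is handled exactly as in Case~1 of the proof of Theorem~\ref{SV}, so the whole difficulty is concentrated in arranging and verifying that $S'$ is transitive on each cone $R'[\mathbf{x}]$, which by Proposition~\ref{SFC}.5 is precisely what $\J{2}_{+}$ demands.

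The main obstacle is exactly this transitivity. If one keeps the definition of $S'$ from Theorem~\ref{SV} verbatim, transitivity fails: given $\mathbf{y} S' \mathbf{z}$ and $\mathbf{z} S' \mathbf{v}$ inside $R'[\mathbf{x}]$, the first edge is witnessed at a branch point $\mathbf{a} = (\mathbf{y}-1)\cap(\mathbf{z}-1)$ by some $\mathbf{y}^{e} S_{v_{1}} \mathbf{z}^{e}$ and the second at a generally different branch point $\mathbf{b} = (\mathbf{z}-1)\cap(\mathbf{v}-1)$ by some $\mathbf{z}^{e} S_{v_{2}} \mathbf{v}^{e}$, and since $v_{1} \neq v_{2}$ the transitivity of the individual relations $S_{x}$ coming from $\J{2}_{+}$ cannot be applied directly to compose them into $\mathbf{y}^{e} S_{v_{3}} \mathbf{v}^{e}$ at the relevant branch point $\mathbf{c} = (\mathbf{y}-1)\cap(\mathbf{v}-1)$.

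My plan to overcome this is to redefine $S'$ so that each $S'$-edge is witnessed at a canonical base point, namely the endpoint $\mathbf{x}^{e}$ of the meet $\mathbf{x} = (\mathbf{y}-1)\cap(\mathbf{z}-1)$ rather than at an arbitrary $v \in \mathrm{Setm}(\mathbf{y}-1, \mathbf{x}-1)$, and then to close $S'$ under transitivity within each cone $R'[\mathbf{x}]$. With a canonical base point the three meets $\mathbf{a}, \mathbf{b}, \mathbf{c}$ of the three sequences obey the tree (ultrametric) law that the smallest two coincide, which reduces the composition of two $S'$-edges to finitely many cases; in each case the endpoints share a common base point at which the underlying $S_{x}$ is transitive, so $\J{2}_{+}$ (Fact~\ref{FC}.2) can be invoked to produce the required edge. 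The delicate point, and the step I expect to be hardest, is re-proving the truth lemma for $\rhd$ under this modified $S'$: in the $\Longrightarrow$ direction one must check that a composite, transitively-closed $S'$-edge out of $\mathbf{y} = \mathbf{x}*\seq{y}$ still yields a genuine single step $\mathbf{y}^{e} S_{\mathbf{x}^{e}} \mathbf{z}^{e}$ in the original model (here the constraint $\mathbf{y}-1 = \mathbf{x}$ pins every intermediate branch point to $\mathbf{x}$, so transitivity of $S_{\mathbf{x}^{e}}$ collapses the composite), while the $\Longleftarrow$ direction goes through as in Theorem~\ref{SV} since the single edges we build are already present before closure. Verifying these two directions carefully, and confirming that the added transitive edges never create spurious forcing of $B \rhd C$, is the heart of the argument and the place where Problem~\ref{P1} is actually resolved.
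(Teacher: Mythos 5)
Your soundness direction and the overall reduction to a truth lemma are fine, and you have correctly located the difficulty in the transitivity of $S'$. But the specific fix you propose has a genuine gap. First, the ``canonical base point'' relation is not made transitive by $\J{2}_{+}$ alone: if $\mathbf{a}=(\mathbf{y}-1)\cap(\mathbf{z}-1)$ and $\mathbf{b}=(\mathbf{z}-1)\cap(\mathbf{v}-1)$ are different meets, the two edges are witnessed by $\mathbf{y}^{e}\,S_{\mathbf{a}^{e}}\,\mathbf{z}^{e}$ and $\mathbf{z}^{e}\,S_{\mathbf{b}^{e}}\,\mathbf{v}^{e}$, and nothing in Fact~\ref{FC}.2 lets you compose steps taken relative to two \emph{different} base points $\mathbf{a}^{e}\neq\mathbf{b}^{e}$ into a single step relative to $\mathbf{c}^{e}$; the ultrametric law tells you which two meets coincide, but not how to merge the two distinct $S_{w}$'s. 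This is presumably why you fall back on taking a transitive closure, but that is where the truth lemma breaks. Your key claim --- that when $\mathbf{y}=\mathbf{x}*\seq{y}$ the constraint $\mathbf{y}-1=\mathbf{x}$ pins every intermediate branch point to $\mathbf{x}$ --- is false: the intermediate nodes of a closure chain need not lie in $R'[\mathbf{x}]$. Take $\mathbf{x}=\seq{x_{1},x_{2}}$, $\mathbf{y}=\seq{x_{1},x_{2},y}$, $\mathbf{u}=\seq{x_{1},u}$ with $u\neq x_{2}$, and $\mathbf{z}\supset\mathbf{x}$. Both meets $(\mathbf{y}-1)\cap(\mathbf{u}-1)$ and $(\mathbf{u}-1)\cap(\mathbf{z}-1)$ equal $\seq{x_{1}}$, so the two canonical edges are witnessed by $y\,S_{x_{1}}\,\mathbf{u}^{e}$ and $\mathbf{u}^{e}\,S_{x_{1}}\,\mathbf{z}^{e}$, and their composite only yields $y\,S_{x_{1}}\,\mathbf{z}^{e}$. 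The closure nevertheless adds the edge $\mathbf{y}\,S'\,\mathbf{z}$ (all three points lie in $R'[\seq{x_{1}}]$, and a simplified $L$-frame in any case demands a globally transitive $S'$), and this edge is then available when evaluating $\mathbf{x}\Vdash' B\rhd C$; but the forward direction of the truth lemma at $\mathbf{x}$ needs $y\,S_{\mathbf{x}^{e}}\,\mathbf{z}^{e}$, i.e.\ $y\,S_{x_{2}}\,\mathbf{z}^{e}$, which does not follow. So the added edges do create spurious forcing of $\rhd$-formulas.

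The paper avoids this by building the chain into the definition of a \emph{single} $S'$-edge and, crucially, by controlling where the base points may come from: $\mathbf{y}\,S'\,\mathbf{z}$ requires a chain $a_{1}\,S_{v_{1}}\,a_{2}\cdots S_{v_{l}}\,a_{l+1}$ from $\mathbf{y}^{e}$ to $\mathbf{z}^{e}$ with all $v_{i}\in\mathrm{Setm}(\mathbf{y}-1,\mathbf{x}-1)$ (a set determined by the \emph{source} $\mathbf{y}$ and the meet, not by intermediate meets), together with the side condition $\mathrm{Setm}(\mathbf{y}-1,\mathbf{x}-1)\supseteq\mathrm{Setm}(\mathbf{z}-1,\mathbf{x}-1)$. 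That containment is the missing ingredient in your proposal: it guarantees that the base points of a chain out of $\mathbf{z}$ are also admissible base points for a chain out of $\mathbf{y}$, so two edges concatenate into one edge of the same shape (after a case analysis on how the three meets nest), and it rules out exactly the detours through nodes like $\mathbf{u}=\seq{x_{1},u}$ above, since $\mathrm{Setm}(\mathbf{y}-1,\cdot)\not\supseteq\mathrm{Setm}(\mathbf{u}-1,\cdot)$ there. In the truth lemma, $\mathbf{y}-1=\mathbf{x}$ forces $\mathrm{Setm}(\mathbf{y}-1,\mathbf{x}-1)=\{\mathbf{x}^{e}\}$, so \emph{every} $v_{i}$ equals $\mathbf{x}^{e}$ and $\J{2}_{+}$ collapses the whole chain to $y\,S_{\mathbf{x}^{e}}\,\mathbf{z}^{e}$. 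You would need to incorporate a condition of this kind for your construction to succeed.
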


\begin{proof}
We only prove the implication ($2 \Rightarrow 1$). 
Suppose $L \nvdash A$. 
By Fact~\ref{SoIC}, there exist a finite $\IL^-\!$-model $(W, R, \{S_{x}\}_{x \in W}, \Vdash)$ and $w \in W$ such that $w \nVdash A$ and all axioms of $L$ are valid in the frame $(W, R, \{S_{x}\}_{x \in W})$. 
Then, we define $W'$, $R'$, and $\Vdash'$ as in the proof of Theorem~\ref{SV}. 
For $\mathbf{x}, \mathbf{y} \in W' \cup \{\varepsilon\}$ with $\mathbf{x} \sqsubseteq \mathbf{y}$, let $\mathbf{y} - \mathbf{x}$ be the set of all elements of $W$ which appears in $\mathbf{y}$ but not in $\mathbf{x}$. 
We define the binary relation $S'$ on $W'$ as follows: 
\begin{itemize}
	\item $\mathbf{y} S' \mathbf{z} :\iff$ For $\mathbf{x} := \pre(\mathbf{y}) \cap \pre(\mathbf{z})$, either $(\mathbf{x} = \varepsilon$ and $\mathbf{y} = \mathbf{z})$ or the following three conditions hold: 
\begin{enumerate}
	\item $\mathbf{x} \neq \varepsilon$;
	\item $\pre(\mathbf{y}) - \pre(\mathbf{x}) \supseteq \pre(\mathbf{z}) - \pre(\mathbf{x})$; \hfill($\dagger$)
	\item there exist $l \geq 0$, $v_{0},\ldots,v_{l} \in \pre(\mathbf{y}) - \pre(\mathbf{x})$, and $a_{0},\ldots,a_{l+1} \in W$ such that $a_{0} = \mathbf{y}^{e}$, $a_{l+1} = \mathbf{z}^{e}$, and $a_{i} S_{v_{i}} a_{i+1}$ for all $i \leq l$. \hfill($\ddagger$)
\end{enumerate}
\end{itemize}

Then, $\mathcal{F} = (W', R', S')$ is a finite $\IL^-(\J{4}_{+})$-Visser frame. 
We show that $L$ is a finite $L$-Visser frame. 

\begin{itemize}
	\item In the case of $L = \CL$, we show that $S'$ is reflexive. 
Let $\mathbf{y} \in W'$ and let $\mathbf{x} : = \pre(\mathbf{y}) \cap \pre(\mathbf{y}) = \pre(\mathbf{y})$. 
If $\mathbf{x} = \varepsilon$, we obtain $\mathbf{y} S' \mathbf{y}$ because $\mathbf{y} = \mathbf{y}$. 
If $\mathbf{x} \neq \varepsilon$, then $\mathbf{x}^e \in \pre(\mathbf{y}) - \pre(\mathbf{x})$. 
Since $\pre(\mathbf{y}) - \pre(\mathbf{x}) \supseteq \pre(\mathbf{y}) - \pre(\mathbf{x})$ and $\mathbf{y}^{e} S_{{\mathbf{x}}^{e}} \mathbf{y}^{e}$ by Fact~\ref{FC}.1, we obtain $\mathbf{y} S' \mathbf{y}$.

	\item We show that $S'$ is transitive. 
Suppose $\mathbf{y} S' \mathbf{z}$ and $\mathbf{z} S' \mathbf{v}$, and let
\begin{itemize}
    \item $\mathbf{x}_{1} : = \pre(\mathbf{y}) \cap \pre(\mathbf{z})$, 
    \item $\mathbf{x}_{2} : = \pre(\mathbf{z}) \cap \pre(\mathbf{v})$, 
    \item $\mathbf{x} : = \pre(\mathbf{y}) \cap \pre(\mathbf{v})$.
\end{itemize}
If $\mathbf{x}_{1} = \varepsilon$ or $\mathbf{x}_{2} = \varepsilon$, then $\mathbf{y} = \mathbf{z}$ or $\mathbf{z} = \mathbf{v}$, and hence we obtain $\mathbf{y} S' \mathbf{v}$. 
So, we may assume $\mathbf{x}_{1} \neq \varepsilon$ and $\mathbf{x}_{2} \neq \varepsilon$. 
Since $\mathbf{x}_{1}$ and $\mathbf{x}_{2}$ are initial segments of $\pre(\mathbf{z})$, there are three possibilities: $\mathbf{x}_{1} \sqsubset \mathbf{x}_{2}$, $\mathbf{x}_{2} \sqsubset \mathbf{x}_{1}$, and $\mathbf{x}_{1} = \mathbf{x}_{2}$. 
In either case, we have that $\mathbf{x}' := \mathbf{x}_{1} \cap \mathbf{x}_{2}$ equals to $\mathbf{x}_{1}$ or $\mathbf{x}_{2}$, and hence $\mathbf{x}' \neq \varepsilon$. 
Since $\mathbf{x}'$ is an initial segment of $\pre(\mathbf{y})$ and $\pre(\mathbf{v})$, we have $\mathbf{x}' \sqsubseteq \mathbf{x}$ by the maximality of $\mathbf{x}$. 
Thus, we get $\mathbf{x} \neq \varepsilon$. 
To show $\mathbf{y} S' \mathbf{v}$, we need to prove that $\mathbf{x}$ satisfies ($\dagger$) and ($\ddagger$). 
We distinguish the following three cases.

\subparagraph*{Case 1:} $\mathbf{x}_{1} \sqsubset \mathbf{x}_{2}$. 

At first, we show $\mathbf{x} = \mathbf{x}_1$. 
Since $\mathbf{x}_1 \sqsubseteq \pre(\mathbf{y})$ and $\mathbf{x}_1 \sqsubset \mathbf{x}_2 \sqsubseteq \pre(\mathbf{v})$, we have $\mathbf{x}_1 \sqsubseteq \mathbf{x}$ by the maximality of $\mathbf{x}$. 
On the other hand, since both $\mathbf{x}$ and $\mathbf{x}_2$ are initial segments of $\pre(\mathbf{v})$, we have $\mathbf{x} \sqsubseteq \mathbf{x}_2$ or $\mathbf{x}_2 \sqsubseteq \mathbf{x}$. 
If $\mathbf{x}_2 \sqsubseteq \mathbf{x}$, then we would have $\mathbf{x}_2 \sqsubseteq \mathbf{x} \sqsubseteq \pre(\mathbf{y})$. 
Since $\mathbf{x}_2 \sqsubseteq \pre(\mathbf{z})$, we would get $\mathbf{x}_2 \sqsubseteq \mathbf{x}_1$ by the maximality of $\mathbf{x}_1$, a contradiction. 
Therefore, we obtain $\mathbf{x} \sqsubseteq \mathbf{x}_2$. 
Since $\mathbf{x} \sqsubseteq \pre(\mathbf{y})$ and $\mathbf{x} \sqsubseteq \mathbf{x}_2 \sqsubseteq \pre(\mathbf{z})$, we have $\mathbf{x} \sqsubseteq \mathbf{x}_1$ by the maximality of $\mathbf{x}_1$ again. 
We have proved $\mathbf{x} = \mathbf{x}_1$. 
To conclude $\mathbf{y} S'  \mathbf{v}$, we verify the conditions ($\dagger$) and ($\ddagger$) as follows: 

($\dagger$): We obtain the inclusion $\pre(\mathbf{y}) - \pre(\mathbf{x}) \supseteq \pre(\mathbf{v}) - \pre(\mathbf{x})$ as follows: 
\begin{align*}
    \pre(\mathbf{y}) - \pre(\mathbf{x}) & = \pre(\mathbf{y}) - \pre(\mathbf{x}_1) \tag{$\mathbf{x} = \mathbf{x}_1$}\\
    & \supseteq \pre(\mathbf{z}) - \pre(\mathbf{x}_1) \tag{$\mathbf{y} S' \mathbf{z}$} \\
    & = (\pre(\mathbf{z}) - \pre(\mathbf{x}_2)) \cup (\pre(\mathbf{x}_2) - \pre(\mathbf{x}_1)) \tag{$\mathbf{x}_{1} \sqsubset \mathbf{x}_{2} \sqsubseteq \pre(\mathbf{z})$} \\
    & \supseteq (\pre(\mathbf{v}) - \pre(\mathbf{x}_2)) \cup (\pre(\mathbf{x}_2) - \pre(\mathbf{x}_1)) \tag{$\mathbf{z} S' \mathbf{v}$} \\
    & = \pre(\mathbf{v}) - \pre(\mathbf{x}_1) \tag{$\mathbf{x}_{1} \sqsubset \mathbf{x}_{2} \sqsubseteq \pre(\mathbf{v})$} \\
    & = \pre(\mathbf{v}) - \pre(\mathbf{x}). \tag{$\mathbf{x} = \mathbf{x}_1$}
\end{align*}


($\ddagger$): Since $\mathbf{y} S' \mathbf{z}$ and $\mathbf{z} S' \mathbf{v}$, 
\begin{itemize}
	\item there exist $w_{0},\ldots,w_{l} \in \pre(\mathbf{y}) - \pre(\mathbf{x}_{1})$ and $a_{0},\ldots,a_{l+1} \in W$ such that
$a_{0} = \mathbf{y}^{e}$, $a_{l+1} = \mathbf{z}^{e}$ and $a_{i} S_{w_{i}} a_{i+1}$ for all $i \leq l$, and 
	\item there exist $w'_{0},\ldots,w'_{l'} \in \pre(\mathbf{z}) - \pre(\mathbf{x}_{2})$ and $b_{0},\ldots,b_{l'+1} \in W$ such that $b_{0} = \mathbf{z}^{e}$, $b_{l'+1} = \mathbf{v}^{e}$ and $b_{i} S_{w_{i}'} b_{i+1}$ for all $i \leq l'$. 
\end{itemize}
We have $\pre(\mathbf{y}) - \pre(\mathbf{x}_1) \supseteq \pre(\mathbf{z}) - \pre(\mathbf{x}_{1}) \supseteq \pre(\mathbf{z}) - \pre(\mathbf{x}_{2})$.
So, $w'_{0},\ldots,w'_{l'}$ are also in $\pre(\mathbf{y}) - \pre(\mathbf{x}_{1})$. 
Then, $w_{0},\ldots,w_{l}, w'_{0}, \ldots, w'_{l'} \in \pre(\mathbf{y}) - \pre(\mathbf{x})$ and $\mathbf{y}^e = a_{0},\ldots,a_{l+1} = b_{0}, b_{1}, \ldots, b_{l'+1} = \mathbf{v}^e$ match the condition ($\ddagger$).


\subparagraph*{Case 2:} $\mathbf{x}_{2} \sqsubset \mathbf{x}_{1}$. 

As in Case 1, it is shown that $\mathbf{x} = \mathbf{x}_2$. 

($\dagger$): The inclusion $\pre(\mathbf{y}) - \pre(\mathbf{x}) \supseteq \pre(\mathbf{v}) - \pre(\mathbf{x})$ is obtained as follows: 
\begin{align*}
    \pre(\mathbf{y}) - \pre(\mathbf{x}) & = \pre(\mathbf{y}) - \pre(\mathbf{x}_2) \tag{$\mathbf{x} = \mathbf{x}_2$}\\
    & = (\pre(\mathbf{y}) - \pre(\mathbf{x}_1)) \cup (\pre(\mathbf{x}_1) - \pre(\mathbf{x}_2)) \tag{$\mathbf{x}_{2} \sqsubset \mathbf{x}_{1} \sqsubseteq \pre(\mathbf{y})$} \\
    & \supseteq (\pre(\mathbf{z}) - \pre(\mathbf{x}_1)) \cup (\pre(\mathbf{x}_1) - \pre(\mathbf{x}_2)) \tag{$\mathbf{y} S' \mathbf{z}$} \\
    & = \pre(\mathbf{z}) - \pre(\mathbf{x}_2) \tag{$\mathbf{x}_{2} \sqsubset \mathbf{x}_{1} \sqsubseteq \pre(\mathbf{z})$} \\
    & \supseteq \pre(\mathbf{v}) - \pre(\mathbf{x}_2) \tag{$\mathbf{z} S' \mathbf{v}$} \\
    & = \pre(\mathbf{v}) - \pre(\mathbf{x}). \tag{$\mathbf{x} = \mathbf{x}_2$}
\end{align*}


($\ddagger$): 
The inclusions $\pre(\mathbf{y}) - \pre(\mathbf{x}) \supseteq \pre(\mathbf{y}) - \pre(\mathbf{x}_1)$ and $\pre(\mathbf{y}) - \pre(\mathbf{x}) \supseteq \pre(\mathbf{z}) - \pre(\mathbf{x}_2)$ also hold in this case. 
These inclusions together with $\mathbf{y} S' \mathbf{z}$ and $\mathbf{z} S' \mathbf{v}$ guarantee the condition ($\ddagger$). 


\subparagraph*{Case 3:} $\mathbf{x}_{1} = \mathbf{x}_{2}$. 

We prove $\mathbf{x}_1 = \mathbf{x}_2 = \mathbf{x}$. 
We already showed that $\mathbf{x}_{1} \cap \mathbf{x}_{2} \sqsubseteq \mathbf{x}$, and hence $\mathbf{x}_1 = \mathbf{x}_2 \sqsubseteq \mathbf{x}$. 
Suppose, towards a contradiction, that $\mathbf{x}_2 \sqsubset \mathbf{x}$. 
Then, there exists $a \in W$ such that $\mathbf{x}_2 \ast \seq{a} \sqsubseteq \mathbf{x}$. 
Since $\mathbf{x}_2 \ast \seq{a} \sqsubseteq \mathbf{x} \sqsubseteq \pre(\mathbf{v})$, we have $a \in \pre(\mathbf{v}) - \pre(\mathbf{x}_2)$. 
Then, 
\begin{equation}\label{ain}
    a \in \pre(\mathbf{z}) - \pre(\mathbf{x}_2)
\end{equation}
because $\pre(\mathbf{v}) - \pre(\mathbf{x}_2) \subseteq \pre(\mathbf{z}) - \pre(\mathbf{x}_2)$. 
Let $b \in W$ be any element such that $\mathbf{x}_2^e R b R a$. 
Since $\mathbf{x}_2 \ast \seq{a} \sqsubseteq \mathbf{x} \sqsubseteq \pre(\mathbf{y})$, we have $b \notin \pre(\mathbf{y}) - \pre(\mathbf{x}_2)$. 
Then, we obtain $b \notin \pre(\mathbf{z}) - \pre(\mathbf{x}_2)$ because $\pre(\mathbf{y}) - \pre(\mathbf{x}_1) \supseteq \pre(\mathbf{z}) - \pre(\mathbf{x}_1)$ and $\mathbf{x}_1 = \mathbf{x}_2$. 
By combining this with (\ref{ain}), we get $\mathbf{x}_2 \ast \seq{a} \sqsubseteq \pre(\mathbf{z})$. 
Since $\mathbf{x}_2 \ast \seq{a} \sqsubseteq \pre(\mathbf{v})$, we have $\mathbf{x}_2 \ast \seq{a} \sqsubseteq \mathbf{x}_2$ by the maximality of $\mathbf{x}_2$, a contradiction. 
So, we have proved that $\mathbf{x}_2 = \mathbf{x}$.

($\dagger$): We obtain $\pre(\mathbf{y}) - \pre(\mathbf{x}) \supseteq \pre(\mathbf{v}) - \pre(\mathbf{x})$ because
\[
\pre(\mathbf{y}) - \pre(\mathbf{x}_1) \supseteq \pre(\mathbf{z}) - \pre(\mathbf{x}_1) = \pre(\mathbf{z}) - \pre(\mathbf{x}_2) \supseteq \pre(\mathbf{v}) - \pre(\mathbf{x}_2).
\] 

($\ddagger$): 
This follows from $\pre(\mathbf{y}) - \pre(\mathbf{x}) = \pre(\mathbf{y}) - \pre(\mathbf{x}_{1})$ and $\pre(\mathbf{y}) - \pre(\mathbf{x}) \supseteq \pre(\mathbf{z}) - \pre(\mathbf{x}_{2})$. 

We have proved that $S'$ is transitive. 
\end{itemize}

Finally, We prove the following claim. 

\begin{cl}\label{cl2}
For any modal formula $D$ and $\mathbf{x} \in W'$, 
\[
\mathbf{x} \Vdash' D \iff \mathbf{x}^e \Vdash D. 
\]
\end{cl}

\begin{proof}
We prove the claim by induction on the construction of $D$. 
We only prove the case $D \equiv B \rhd C$. 

\medskip

($\Rightarrow$): Assume $\mathbf{x} \Vdash' B \rhd C$. 
We show $\mathbf{x}^e \Vdash B \rhd C$. 
Suppose $\mathbf{x}^e R y$ and $y \Vdash B$. 
Let $\mathbf{y}:= \mathbf{x} * \seq{y}$, then obviously $\mathbf{y} \in W'$ and $\mathbf{x} R' \mathbf{y}$. 
By the induction hypothesis, $\mathbf{y} \Vdash' B$. 
By our assumption, there exists $\mathbf{z} \in W'$ such that $\mathbf{x} R' \mathbf{z}$, $\mathbf{y} S' \mathbf{z}$, and $\mathbf{z} \Vdash' C$. 
Since $\pre(\mathbf{y}) = \mathbf{x}$ and $\mathbf{x} \sqsubseteq \pre(\mathbf{z})$, we have $\mathbf{x} = \pre(\mathbf{y}) \cap \pre(\mathbf{z})$. 
Also, since $\pre(\mathbf{y}) - \pre(\mathbf{x}) = \{\mathbf{x}^{e}\}$, it follows from $\mathbf{y} S' \mathbf{z}$ that 
\[
(\exists a_{0},\ldots,a_{l+1} \in W)\bigl(a_{0} = \mathbf{y}^{e} \, \& \, a_{l+1} = \mathbf{z}^{e} \, \&  \, (\forall i \leq l)(a_{i} S_{\mathbf{x}^{e}} a_{i+1})\bigr). 
\]
By Proposition~\ref{FC}.2, $S_{\mathbf{x}^{e}}$ is transitive, and hence $\mathbf{y}^{e} S_{\mathbf{x}^{e}} \mathbf{z}^{e}$. 
By the induction hypothesis, $\mathbf{z}^e \Vdash C$, and therefore $\mathbf{x}^e \Vdash B \rhd C$. 

\medskip

($\Leftarrow$): Assume $\mathbf{x}^e \Vdash B \rhd C$. 
We show $\mathbf{x} \Vdash' B \rhd C$. 
Suppose $\mathbf{x} R' \mathbf{y}$ and $\mathbf{y} \Vdash' B$. 
Then, $\mathbf{x}^e R \mathbf{y}^e$ and $\mathbf{y}^e \Vdash B$ by the induction hypothesis. 
By our assumption, there exists $z \in W$ such that $\mathbf{y}^e S_{\mathbf{x}^e} z$ and $z \Vdash C$. 
Let $\mathbf{z} := \mathbf{x}*\seq{z}$. 
By Propositions~\ref{FC}.3, we have $\mathbf{x}^e R z$. 
Thus, $\mathbf{z} \in W'$ and $\mathbf{x} R' \mathbf{z}$. 
Since $\pre(\mathbf{z}) = \mathbf{x}$ and $\mathbf{x} \sqsubseteq \pre(\mathbf{y})$, we have $\mathbf{x} = \pre(\mathbf{y}) \cap \pre(\mathbf{z})$. 
Then, $\pre(\mathbf{z}) - \pre(\mathbf{x}) = \{\mathbf{x}^{e}\}$, and hence $\pre(\mathbf{y}) - \pre(\mathbf{x}) \supseteq \pre(\mathbf{z}) - \pre(\mathbf{x})$. 
Also, we have $\mathbf{x}^{e} \in \pre(\mathbf{y}) - \pre(\mathbf{x})$ and $\mathbf{y}^e S_{\mathbf{x}^e} \mathbf{z}^{e}$. 
Therefore, we obtain $\mathbf{y} S' \mathbf{z}$. 
By the induction hypothesis, $\mathbf{z} \Vdash' C$, and therefore we conclude $\mathbf{x} \Vdash' B \rhd C$. 
\end{proof}
By the claim, we conclude $\seq{w} \nVdash' A$. 
\end{proof}

\subsection{The modal completeness of $\IL^-(\J{2}_{+}, \J{5})$}

In this subsection, we investigate the logic $\IL^-(\J{2}_{+}, \J{5})$. 
The logic $\IL^-(\J{2}_{+}, \J{5})$ is complete with respect to the class of all $\IL^-(\J{2}_{+}, \J{5})$-Visser frames, however, it does not have the finite frame property. 

At first, we show that $\IL^-(\J{2}_{+}, \J{5})$ lacks the finite frame property. 
For this, we prepare the so-called generated submodel lemma. 
Let $M = (W, R, S, \Vdash)$ be any $\IL^-(\J{4}_+)$-Visser model. 
For each $r \in W$, let $M^\ast = (W^*, R^*, S^*, \Vdash^*)$ be the $\IL^-(\J{4}_+)$-Visser model simply obtained from $M$ by restricting $W$ to $R[r] \cup \{r\}$. 
We call the model $M^\ast$ the \textit{submodel of $M$ generated by $r$}.

\begin{lem}[The Generated Submodel Lemma]\label{gsl}
Let $M = (W, R, S, \Vdash)$ be any $\IL^-(\J{4}_+)$-Visser model, $r \in W$, and $M^* = (W^*, R^*, S^*, \Vdash^*)$ be the generated submodel of $M$ by $r$. 
Then, for any $x \in W^*$ and any modal formula $A$, 
\[
    x \Vdash A \iff x \Vdash^* A.
\]
\end{lem}
\begin{proof}
This lemme is proved by induction on the construction of $A$. 
In particular, the assumption `$x \Vdash B \rhd C$ and $x R y$' implies the existence of $z \in W$ such that $x R z$, $y S z$, and $z \Vdash C$. 
Since $x \in R[r] \cup \{r\}$, we have $z \in R[r] \subseteq W^*$. 
This observation shows the equivalence of the lemma in the case of $A \equiv B \rhd C$.  
\end{proof}

In \cite[Proposition 10]{IKO20}, the authors with Iwata proved that for any modal formula $A$, $\IL \vdash A$ if and only if 
\[
    \IL^-(\J{2}_{+}, \J{5}) \vdash \bigwedge\{(C \rhd C) \land \Box(C \rhd C) \mid C \in \mathrm{Sub}(A)\} \to A, 
\]
where $\Sub(A)$ is the set of all subformulas of $A$. 
By using this equivalence, we then proved that the Craig interpolation property of $\IL$ follows from that of $\IL^-(\J{2}_{+}, \J{5})$. 
We prove the following proposition based on the same idea.

\begin{prop}\label{Cex}
The logic $\IL^-(\J{2}_{+}, \J{5})$ does not have finite frame property with respect to $\IL^-(\J{2}_{+}, \J{5})$-Visser frames. 
\end{prop}

\begin{proof}
Suppose that $\IL^-(\J{2}_{+}, \J{5})$ has the finite frame property with respect to $\IL^-(\J{2}_{+}, \J{5})$-Visser frames. 
Then, we show that $\IL$ also has the finite frame property with respect to $\IL$-Visser frames. 
This contradicts Fact~\ref{Fact:SComp}.2. 

Suppose $\IL \nvdash A$. 
It suffices to show that there exists a finite $\IL$-Visser frame in which $A$ is not valid. 
Since $\IL$ proves $C \rhd C$ for any modal formula $C$, we have 
\[
\IL^-(\J{2}_{+}, \J{5}) \nvdash \bigwedge\{(C \rhd C) \land \Box(C \rhd C) \mid C \in \mathrm{Sub}(A)\} \to A. 
\] 
By our supposition, there exist a finite $\IL^-(\J{2}_{+}, \J{5})$-Visser frame $\mathcal{F} = (W, R, S)$ and $w \in W$ such that $w \Vdash \bigwedge\{(C \rhd C) \land \Box(C \rhd C) \mid C \in \mathrm{Sub}(A)\}$ and $w \nVdash A$. 
By the generated submodel lemma, we may assume that $w$ is the root of $W$, that is, $W = R[w] \cup \{w\}$. 
Hence, for any $x \in W$ and any $C \in \mathrm{Sub}(A)$, we have $x \Vdash C \rhd C$. 
We define the binary relation $S'$ and the satisfaction relation $\Vdash'$ as follows:
\begin{itemize}
	\item $y S' z :\iff y S z$ or $y = z$.
	\item $x \Vdash' p :\iff x \Vdash p$. 
\end{itemize}
Obviously, $\mathcal{F}' = (W, R, S')$ is a finite $\IL$-Visser frame. 
We then prove the following claim. 

\begin{cl}\label{cl6}
For any modal formula $D \in \Sub(A)$ and $x \in W$, 
\[
x \Vdash' D \iff x \Vdash D. 
\]
\end{cl}

\begin{proof}
We prove the claim by induction on the construction of $D$. 
We only prove the case $D \equiv B \rhd C$. 

\medskip

($\Rightarrow$): Assume $x \Vdash' B \rhd C$ and we show $x \Vdash B \rhd C$. 
Suppose $x R y$ and $y \Vdash B$. 
By the induction hypothesis, $y \Vdash' B$, and hence there exists $z \in W$ such that $x R z$, $y S' z$, and $z \Vdash' C$.  
By the induction hypothesis, $z \Vdash C$.  
If $y S z$, then we are done. 
If $y = z$, then $y \Vdash C$, and so $x \Vdash C \rhd C$ implies that there exists $z' \in W$ such that $x R z'$, $y S z'$, and $z' \Vdash C$. 
In either case, we obtain $x \Vdash B \rhd C$. 

\medskip

($\Leftarrow$): This is obviously proved. 
\end{proof}

Since $w \nVdash A$, we obtain $w \nVdash' A$ by the claim. 
Therefore, we conclude $A$ is not valid in $\mathcal{F}$.
\end{proof}

Next, we prove the modal completeness $\IL^-(\J{2}_{+}, \J{5})$. 
Our proof is based on Ignatiev's construction of models presented in~\cite{Ign91}. 

\begin{thm}\label{SVIL}
For any modal formula $A$, the following are equivalent: 
\begin{enumerate}
	\item $\IL^-(\J{2}_{+}, \J{5}) \vdash A$. 
	\item $A$ is valid in all $\IL^-(\J{2}_{+}, \J{5})$-Visser frames.
\end{enumerate}
\end{thm}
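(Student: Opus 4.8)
The plan is to prove the nontrivial direction $2 \Longrightarrow 1$ by contraposition, adapting the construction used in Theorems~\ref{SV} and~\ref{SV2} but now producing a possibly \emph{infinite} simplified frame, since Corollary~\ref{Cex} tells us finite model property fails for $\IL^-(\J{2}_{+}, \J{5})$. Assume $\IL^-(\J{2}_{+}, \J{5}) \nvdash A$. By Fact~\ref{SoIC} there are a (finite) $\IL^-\!$-model $(W, R, \{S_{x}\}_{x \in W}, \Vdash)$ and $w \in W$ with $w \nVdash A$ in which all axioms of $\IL^-(\J{2}_{+}, \J{5})$ are valid; in particular, by Fact~\ref{FC}, $S_{x}$ is transitive for each $x$ and $R \subseteq S_{x}$ in the appropriate sense. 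The first step is to set up the sequence space $W'$ together with $R'$ and $\Vdash'$ exactly as before, and then to define a relation $S'$ on $W'$ that simultaneously encodes transitivity (needed for $\J{2}_{+}$) and the $R' \subseteq S'$ condition (needed for $\J{5}$). Because $\J{5}$ forces $R \subseteq S$, the previous definitions of $S'$ cannot be reused verbatim: a single pass through the $\mathrm{Setm}$-witnesses is no longer enough, and I would expect that the set $W'$ must be enlarged (allowing infinitely long sequences, or sequences built over an auxiliary copy of $W$) so that the truth lemma can still be pushed through while keeping $R \subseteq S'$.

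After fixing the definition of $S'$, the plan is to verify the frame conditions. First I would check that $(W', R')$ has $R'$ transitive and conversely well-founded; with infinite sequences this requires care, so I would either bound the relevant sequences or argue converse well-foundedness directly from the conversely well-founded $R$ on the finite $W$. Then I would establish that $\mathcal{F} = (W', R', S')$ is a simplified $\IL^-(\J{2}_{+}, \J{5})$-frame, i.e. that $S'$ is transitive (the $\J{2}_{+}$ clause) and that $R' \subseteq S'$ (the $\J{5}$ clause). The transitivity argument will follow the three-case split on the relative position of the witnessing initial segments $\mathbf{x}_1 = (\mathbf{y}-1)\cap(\mathbf{z}-1)$ and $\mathbf{x}_2 = (\mathbf{z}-1)\cap(\mathbf{v}-1)$ seen in the proof of Theorem~\ref{SV2}, concatenating the $S_{v_i}$-chains and invoking transitivity of each $S_x$. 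The inclusion $R' \subseteq S'$ should reduce, via Fact~\ref{FC}.4, to the fact that in the original model $xRy$ and $yRz$ give $y S_x z$.

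The decisive step is the truth lemma, the analogue of Claims~\ref{cl1} and~\ref{cl2}:
\[
\mathbf{x} \Vdash' A' \Longleftrightarrow \mathbf{x}^{e} \Vdash A'
\]
for every formula $A'$ and every $\mathbf{x} \in W'$, proved by induction on $A'$ with the only interesting case $A' \equiv B \rhd C$. The two directions mirror the earlier proofs: from right to left, given a witness $z$ with $\mathbf{y}^{e} S_{\mathbf{x}^{e}} z$ and $z \Vdash C$ one forms $\mathbf{z} := \mathbf{x} * \seq{z}$, uses $\mathrm{Setm}(\mathbf{y}-1,\mathbf{x}-1) = \{\mathbf{x}^{e}\}$ to read off $\mathbf{y} S' \mathbf{z}$, and applies the induction hypothesis; from left to right one extends $\mathbf{x}$ by the given $y \Vdash B$, extracts an $S_{\mathbf{x}^{e}}$-witness from $\mathbf{y} S' \mathbf{z}$ using transitivity of $S_{\mathbf{x}^{e}}$ to collapse the chain of $\mathrm{Setm}$-witnesses to a single step.

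The main obstacle I anticipate is reconciling the $\J{5}$ requirement $R' \subseteq S'$ with the truth lemma. Demanding $R' \subseteq S'$ adds many $S'$-pairs that are not justified by any $S_x$-chain in the original model, and these spurious pairs threaten the left-to-right direction of the truth lemma, because a new $S'$-successor $\mathbf{z}$ of $\mathbf{y}$ forced by $R'$-inclusion might satisfy $C$ without any corresponding $S_{\mathbf{x}^{e}}$-witness back in $W$. Overcoming this is presumably exactly why the construction has to leave the finite setting: one must thread the extra $R'$-successors into the sequence structure so that every $S'$-edge used in evaluating $B \rhd C$ still traces back to a genuine $S_{\mathbf{x}^{e}}$-witness, and I expect the bookkeeping that guarantees this — rather than any single inequality — to be the technical heart of the argument. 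Once the truth lemma holds, $w \nVdash A$ yields $\seq{w} \nVdash' A$, so $A$ fails in $\mathcal{F}$, completing the contrapositive.
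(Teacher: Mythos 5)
Your proposal correctly identifies the shape of the problem --- the counter-frame must be infinite, the old definitions of $S'$ cannot survive the requirement $R' \subseteq S'$, and the danger is that the extra $S'$-pairs forced by that requirement break the left-to-right direction of the truth lemma --- but it stops exactly at the point where the actual work begins. You never define $W'$ or $S'$; you only say that $W'$ ``must be enlarged'' and that some unspecified ``bookkeeping'' will thread the extra successors back to genuine $S_{\mathbf{x}^e}$-witnesses. That bookkeeping is the entire content of the theorem. In the paper, the elements of $W'$ are not plain $R$-chains but pairs $(\Gamma,\Delta)$, where $\Gamma = \seq{x_1,\ldots,x_n}$ and $\Delta = \seq{v_1,\ldots,v_{n-1}}$ records, for each step, whether it is an $R$-step ($v_i = 0$) or an $S_{v_i}$-step ($v_i \neq 0$); $R'$ requires every nonzero label used after position $n$ to point back to some $x_k$ with $k \geq n$; and $S'$ is defined by $R(\mathbf{y}) \subseteq R(\mathbf{z})$ and $\abs{\mathbf{y}} < \abs{\mathbf{z}}$, where $R(\cdot)$ truncates a sequence at its last $R$-labelled step. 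With that definition $S'$ is transitive and contains $R'$ essentially for free, and the real effort goes into showing that $R'$ is transitive and conversely well-founded and into the analogues of Claims~\ref{cl3} and~\ref{cl4}, which convert the labelled chains back into statements $x_n R x_i$ and $x_{n+1} S_{x_n} x_i$ about the original model. None of this is recoverable from your outline, and your anticipated distribution of difficulty (hard transitivity of $S'$ via a three-case split, easy frame conditions on $R'$) is inverted relative to what the construction actually demands.

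Worse, the concrete steps you do describe are the ones you yourself argue cannot work. Your sketch of the truth lemma forms $\mathbf{z} := \mathbf{x} * \seq{z}$, reads off $\mathbf{y} S' \mathbf{z}$ from $\mathrm{Setm}(\mathbf{y}-1,\mathbf{x}-1) = \{\mathbf{x}^e\}$, and collapses a chain of $\mathrm{Setm}$-witnesses by transitivity of $S_{\mathbf{x}^e}$ --- this is verbatim the machinery of Theorem~\ref{SV2}, which you have already (correctly) declared unusable in the presence of $\J{5}$. In the actual proof the witness $\mathbf{z}$ must extend $\mathbf{y}$, not $\mathbf{x}$, by one step labelled $\mathbf{x}^e$, precisely so that $R(\mathbf{y}) = R(\mathbf{z})$ and $\abs{\mathbf{y}} < \abs{\mathbf{z}}$ hold; and the forward direction uses $\abs{\mathbf{z}} \geq \abs{\mathbf{y}} + 1$ together with Claim~\ref{cl4} rather than any $\mathrm{Setm}$ computation. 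So this is not a proof with a fixable gap in one step: the central construction is missing, and the surrounding argument is keyed to the wrong construction.
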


\begin{proof}
We only prove the implication $(2 \Rightarrow 1)$. 
Suppose $\IL^-(\J{2}_{+}, \J{5}) \nvdash A$. 
By Fact~\ref{SoIC}, there exist a finite $\IL^-\!$-model $(W, R, \{S_{x}\}_{x \in W}, \Vdash)$ and $w \in W$ such that $w \nVdash A$ and both $\J{2}_{+}$ and $\J{5}$ are valid in the frame $(W, R, \{S_{x}\}_{x \in W})$. 
We may assume $0 \notin W$. 
For each finite sequence $\Gamma$ of elements of $W \cup \{0\}$, let $|\Gamma|$ denote the length of $\Gamma$. 
We define the set $W'$ as follows:  
\begin{eqnarray*}
W' := \{(\Gamma, \Delta) \mid &1)& \Gamma \, \text{is a finite sequence of elements of} \ W \, \text{and} \, \Gamma \neq \varepsilon,\\
&2)& \Delta \, \text{is a finite sequence of elements of} \, W \cup \{0\}  \, \text{and}\\
&& \Delta=\varepsilon \, \text{is allowed,} \\
&3)& |\Delta| + 1 = |\Gamma|, \\
&4)& \text{for} \, \Gamma =\seq{x_{0},\ldots,x_{n}} \, \text{and} \, \Delta = \seq{v_{0},\ldots,v_{n-1}}, \\
&& (\forall i < n) [(v_{i} = 0 \Rightarrow x_{i} R x_{i+1}) \land (v_{i} \neq 0 \Rightarrow x_{i} S_{v_{i}} x_{i+1})]\}.
\end{eqnarray*}

In the following, $\mathbf{x}, \mathbf{y}, \mathbf{z},\ldots$ denote elements of $W'$. 
Here, we prepare several notation. 
In this paragraph, let $\mathbf{x}$ and $\mathbf{y}$ be $(\seq{x_{0},\ldots,x_{n}}, \seq{v_{0},\ldots,v_{n-1}})$ and $(\seq{y_{0},\ldots,y_{m}}, \seq{w_{1},\ldots,w_{m-1}})$, respectively.
\begin{itemize}
	\item $\mathbf{x} \sqsubset \mathbf{y} : \iff$ the first and the second components of $\mathbf{x}$ are proper initial segments of those of $\mathbf{y}$, respectively. 
    In other words, $n < m$, $(\forall i \leq n)(x_{i} = y_{i})$, and $(\forall j < n)(v_{j} = w_{j})$. 
    \item $\mathbf{x} \sqsubseteq \mathbf{y} : \iff \mathbf{x} \sqsubset \mathbf{y}$ or $\mathbf{x} = \mathbf{y}$. 
    \item $k_\mathbf{x} : = \min \{j \mid \forall i\, (j \leq i < n \Rightarrow v_{i-1} \neq 0)\}$. 
	\item $K(\mathbf{x}) := (\seq{x_{0},\ldots,x_{k_\mathbf{x}}}, \seq{v_{0},\ldots,v_{k_\mathbf{x}-1}})$. 
	\item $|\mathbf{x}| := n-1$. 
	\item $\mathbf{x}^{e} := x_{n}$. 
\end{itemize}
It follows from the definitions that for any $\mathbf{x} \in W'$, we have $k_{\mathbf{x}} \leq |\mathbf{x}|$, $K(\mathbf{x}) \sqsubseteq \mathbf{x}$, and $K(\mathbf{x}) \in W'$.  
Also, $\mathbf{x} \sqsubseteq \mathbf{y}$ implies $k_\mathbf{x} \leq k_\mathbf{y}$, and hence $K(\mathbf{x}) \sqsubseteq K(\mathbf{y})$. 
We define the binary relations $R'$ and $S'$ on $W'$ and the satisfaction relation $\Vdash'$ as follows:
\begin{itemize}
	\item $\mathbf{x} R' \mathbf{y} :\iff$ $\mathbf{x} \sqsubset \mathbf{y}$ and 
\begin{equation}\label{R'cond}
\forall i \, \bigl(n \leq i < m \, \& \, v_{i} \neq 0 \Rightarrow \exists u\,  (n \leq u \leq i \, \& \, v_{i} = x_{u})\bigr).
\end{equation}

	\item $\mathbf{y} S' \mathbf{z} :\iff K(\mathbf{y}) \sqsubseteq K(\mathbf{z})$ and $|\mathbf{y}| < |\mathbf{z}|$. 

	\item $\mathbf{x} \Vdash' p :\iff \mathbf{x}^e \Vdash p$. 
\end{itemize}

We show that $\mathcal{F} := (W', R', S')$ is an $\IL^-(\J{2}_{+}, \J{5})$-Visser frame. 
Since $S'$ is obviously transitive and satisfies $R' \subseteq S'$, it suffices to prove that $R'$ is transitive and conversely well-founded. 

Firstly, we show that $R'$ is transitive. 
Suppose $\mathbf{x} R' \mathbf{y}$ and $\mathbf{y} R' \mathbf{z}$. 
We show $\mathbf{x} R' \mathbf{z}$. 
Let
\begin{itemize}
    \item $\mathbf{x} = (\seq{x_{0},\ldots,x_{n}}, \seq{v_{0},\ldots,v_{n-1}})$,
    \item $\mathbf{y} = (\seq{x_{0},\ldots,x_{m}}, \seq{v_{0},\ldots,v_{m-1}})$,
    \item $\mathbf{z} = (\seq{x_{0},\ldots,x_{l}}, \seq{v_{0},\ldots,v_{l-1}})$, where $n < m < l$. 
\end{itemize}

Let $i$ be such that $n \leq i < l$ and $v_i \neq 0$. 
\begin{itemize}
    \item If $n \leq i < m$, then by (\ref{R'cond}) of $\mathbf{x} R' \mathbf{y}$, there exists $u$ such that $n \leq u \leq i$ and $v_i = x_u$.
    \item If $m \leq i < l$, then by (\ref{R'cond}) of $\mathbf{y} R' \mathbf{z}$, there exists $u$ such that $m \leq u \leq i$ and $v_i = x_u$.
\end{itemize}
Hence, (\ref{R'cond}) of $\mathbf{x} R' \mathbf{z}$ is verified. 
Thus, we conclude that $\mathbf{x} R' \mathbf{z}$ holds. 

Secondly, we show that $R'$ is conversely well-founded. 
We prove the following claim. 

\begin{cl}\label{cl3}
For $\mathbf{x}, \mathbf{y} \in W'$, suppose $\mathbf{x} R' \mathbf{y}$, $\mathbf{x} = (\seq{x_{0},\ldots,x_{n}}, \seq{v_{0},\ldots,v_{n-1}})$, and $\mathbf{y} = (\seq{x_{0},\ldots,x_{m}}, \seq{v_{0},\ldots,v_{m-1}})$, where $n < m$. 
Then, for any $i$ with $n+1 \leq i \leq m$, we have $x_{n} R x_{i}$. 
\end{cl}

\begin{proof}
We prove the claim by induction on $i$. 
Let $i$ be such that $n+1 \leq i \leq m$ and suppose that the statement holds for all $j$ with $n+1 \leq j < i$. 
We distinguish the following two cases. 
\begin{itemize}
	\item[Case 1:] $v_{i-1} = 0$. 
 Then, $x_{i-1} R x_{i}$. 
If $i-1 = n$, then we are done. 
If $i-1 \geq n+1$, then by the induction hypothesis, $x_{n} R x_{i-1}$, and so we obtain $x_n R x_i$. 

	\item[Case 2:] $v_{i-1} \neq 0$. 
 Then, $x_{i-1} S_{v_{i-1}} x_{i}$. 
Since $n \leq i - 1 < m$, by (\ref{R'cond}), there exists $u$ such that $n \leq u \leq i-1$ and $v_{i-1} = x_{u}$. 
So, $x_{i-1} S_{x_u} x_{i}$, and hence we have $x_{u} R x_{i}$ by Fact~\ref{FC}.2. 
If $u = n$, then we are done. 
If $u \neq n$, then $n+1 \leq u < i$, and so by the induction hypothesis, $x_n R x_u$. 
Thus, we conclude $x_{n} R x_{i}$. \qedhere
\end{itemize}
\end{proof}

If $\mathbf{x} R' \mathbf{y}$, then by the claim, we have $\mathbf{x}^{e} R \mathbf{y}^{e}$. 
So, the converse well-foundedness of $R'$ follows from that of $R$.

Similar to the above claim, the following claim on the family $\{S_x\}_{x \in W}$ holds. 
\begin{cl}\label{cl4}
For $\mathbf{x}, \mathbf{y} \in W'$, suppose $\mathbf{x} R' \mathbf{y}$, $\mathbf{x} = (\seq{x_{0},\ldots,x_{n}}, \seq{v_{0},\ldots,v_{n-1}})$, and $\mathbf{y} = (\seq{x_{0},\ldots,x_{m}}, \seq{v_{0},\ldots,v_{m-1}})$, where $n < m$. 
Then, for any $i$ with $n+2 \leq i \leq m$, we have $x_{n+1} S_{x_{n}} x_{i}$. 
\end{cl}

\begin{proof}
We prove the claim by induction on $i$. 
Let $i$ be such that $n+2 \leq i \leq m$ and suppose that the statement holds for all $j$ with $n+2 \leq j < i$. 
We distinguish the following two cases. 
\begin{itemize}
	\item[Case 1:] $v_{i-1} = 0$. 
Then, $x_{i-1} R x_{i}$.
Since $n+1 \leq i-1 \leq m$, by the claim above, we have $x_{n} R x_{i-1}$. 
We then obtain $x_{i-1} S_{x_{n}} x_{i}$ by Fact~\ref{FC}.4. 
If $i-1= n+1$, then we are done. 
If $i-1\geq n+2$, then by the induction hypothesis, we get $x_{n+1} S_{x_{n}} x_{i-1}$. 
Then, we obtain $x_{n+1} S_{x_{n}} x_{i}$ by Fact~\ref{FC}.2.

	\item[Case 2:] $v_{i-1} \neq 0$. 
 Then, $x_{i-1} S_{v_{i-1}} x_{i}$. 
Since $n \leq i - 1 < m$, by (\ref{R'cond}), there exists a natural number $u$ such that $n \leq u \leq i-1$ and $v_{i-1} = x_{u}$, and hence $x_{i-1} S_{x_u} x_{i}$. 
If $u=n$, we then obtain $x_{n+1} S_{x_{n}} x_{i}$ as in Case 1. 
If $u \geq n + 1$, then we have $x_{n} R x_{u}$ by the claim above.  
Since $x_u R x_i$ follows from $x_{i-1} S_{x_u} x_{i}$ by Fact~\ref{FC}.3, we obtain $x_{u} S_{x_{n}} x_{i}$ by Fact~\ref{FC}.4. 
If $u = n+1$, then we are done. 
If $u \geq n+2$, then by the induction hypothesis, we have $x_{n+1} S_{x_{n}} x_{u}$. 
Then, we conclude $x_{n+1} S_{x_{n}} x_{i}$ by Fact~\ref{FC}.2. \qedhere
\end{itemize}
\end{proof}

Finally, we prove the following claim. 

\begin{cl}\label{cl5}
For any modal formula $D$ and $\mathbf{x} \in W'$, 
\[
\mathbf{x} \Vdash' D \iff \mathbf{x}^e \Vdash D. 
\]
\end{cl}

\begin{proof}
Let $\mathbf{x} = (\seq{x_{0},\ldots,x_{n}}, \seq{v_{0},\ldots,v_{n-1}})$. 
We prove the claim by induction on the construction of $D$. 
We only prove the case $D \equiv B \rhd C$. 

\medskip

($\Rightarrow$): Assume $\mathbf{x} \Vdash' B \rhd C$. 
We show $\mathbf{x}^e \Vdash B \rhd C$. 
Suppose $\mathbf{x}^e R y$ and $y \Vdash B$. 
Let $\mathbf{y}:=  (\seq{x_{0},\ldots,x_{n}, y}, \seq{v_{0},\ldots,v_{n-1}, 0})$. 
Then, we have $\mathbf{y} \in W'$ and $\mathbf{x} \sqsubset \mathbf{y}$. 
Also, since the condition (\ref{R'cond}) of $\mathbf{x} R' \mathbf{y}$ trivially meets, we have that $\mathbf{x} R' \mathbf{y}$ holds. 
By the induction hypothesis, we have $\mathbf{y} \Vdash' B$, and so there exists $\mathbf{z} \in W'$ such that $\mathbf{x} R' \mathbf{z}$, $\mathbf{y} S' \mathbf{z}$, and $\mathbf{z} \Vdash' C$. 
Since $\mathbf{x} \sqsubset \mathbf{z}$, we have that $\mathbf{z}$ is of the form
\[
    (\seq{x_{0},\ldots,x_{n}, x_{n+1}, \ldots, x_{m}}, \seq{v_{0},\ldots, v_{n-1}, v_{n}, \ldots, v_{m-1}})
\]
for some $m > n$. 
By the definition of $\mathbf{y}$, we have $k_{\mathbf{y}} = n$, and so $\mathbf{y} = K(\mathbf{y})$. 
Also, by the definition of $S'$ we have $K(\mathbf{y}) \sqsubseteq K(\mathbf{z})$. 
Thus $\mathbf{y} \sqsubseteq K(\mathbf{z})$. 
This means that $x_{n+1} = y$. 
Moreover, since $|\mathbf{y}| < |\mathbf{z}|$, we have $m \geq n+2$. 
By the second claim, we obtain $x_{n+1} S_{x_n} x_m$, that is, $y S_{\mathbf{x}^e} \mathbf{z}^e$. 
By the induction hypothesis, $\mathbf{z}^e \Vdash C$, and hence $\mathbf{x}^e \Vdash B \rhd C$. 

\medskip

($\Leftarrow$): Assume $\mathbf{x}^e \Vdash B \rhd C$. 
We show $\mathbf{x} \Vdash' B \rhd C$. 
Suppose $\mathbf{x} R' \mathbf{y}$ and $\mathbf{y} \Vdash' B$. 
Then, $\mathbf{y}$ is of the form $(\seq{x_{0},\ldots,x_{n}, x_{n+1}, \ldots, x_{m}}, \seq{v_{0},\ldots, v_{n-1}, v_{n}, \ldots, v_{m-1}})$ for some $m > n$. 
By the first claim, we have $x_n R x_m$, that is, $\mathbf{x}^e R \mathbf{y}^e$. 
Since $\mathbf{y}^e \Vdash B$ by the induction hypothesis, there exists $z \in W$ such that $\mathbf{y}^e S_{\mathbf{x}^e} z$ and $z \Vdash C$. 
Let $\mathbf{z} := (\seq{x_{0},\ldots,x_{m}, z}, \seq{v_{0},\ldots, v_{m-1}, \mathbf{x}^e})$. 
Obviously, we have $\mathbf{z} \in W'$ and $\mathbf{x} R' \mathbf{z}$. 
Then, $|\mathbf{y}| < |\mathbf{z}|$. 
Also, since $k_\mathbf{y} = k_\mathbf{z}$, we have $K(\mathbf{y}) = K(\mathbf{z})$.  
Thus, $\mathbf{y} S' \mathbf{z}$. 
By the induction hypothesis, $\mathbf{z} \Vdash' C$, and hence $\mathbf{x} \Vdash' B \rhd C$. 
\end{proof}
By the claim, we conclude $(\seq{w}, \varepsilon) \nVdash' A$. 
\end{proof}

\section{Concluding remarks}

In this paper, we investigated the modal completeness and the finite frame property of several extensions of $\IL^-(\J{4}_+)$ with respect to Visser frames. 
Among other things, we proved that $\CL$ has the finite frame property with respect to $\CL$-Visser frames (Theorem~\ref{SV2}). 
Then, Ignatiev's proof of the arithmetical completeness of $\CL$ (Theorem \ref{ACT}) is now verified.

Moreover, our result is applicable to the completeness of these logics with respect to topological semantics. 
In~\cite{IK2021}, a topological semantics of extensions of $\CL$ was introduced. 
Also, it is proved that $\CL$ is sound and complete with respect to that topological semantics by translating every $\CL$-Visser frame into a bitopological space. 
Since Ignatiev's counter $\CL$-Visser model is infinite, it is asked in \cite[Problem 6.3]{IK2021} whether $\CL$ has the finite frame property with respect to topological semantics or not. 
Theorem~\ref{SV2} of the present paper also answers this problem affirmatively.

\begin{cor}
The logic $\CL$ has the finite frame property with respect to Iwata and Kurahashi's topological semantics. 
\end{cor}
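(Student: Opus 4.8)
The plan is to combine Theorem~\ref{SV2} with the frame-to-space translation of Iwata and Kurahashi~\cite{IK2021}. Suppose $\CL \nvdash A$. By Theorem~\ref{SV2}, there exist a \emph{finite} simplified $\CL$-frame $\mathcal{F} = (W, R, S)$, a forcing relation, and a point $w \in W$ with $w \nVdash A$; in particular $A$ is not valid in $\mathcal{F}$. The idea is to push this finite countermodel through their construction and argue that the resulting bitopological space is again finite and still refutes $A$.

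First I would recall the translation of~\cite{IK2021} that assigns to each simplified $\CL$-frame a bitopological space together with a valuation, and the accompanying truth lemma stating that a formula holds at a point of the space if and only if it is forced at the corresponding point of the frame. This truth lemma is precisely the device underlying their completeness theorem, so refutation is transferred: from $w \nVdash A$ we obtain a point of the associated space at which $A$ fails, and hence $A$ is not valid in that space.

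The key remaining observation is a finiteness check. Since $W$ is finite, the underlying set of the associated bitopological space is finite, because their construction builds the carrier directly from $W$ without enlarging a finite set into an infinite one; and any topology on a finite set is itself finite, so both topologies of the resulting space are finite. Hence the space obtained from the finite $\mathcal{F}$ is a finite bitopological space in which $A$ is not valid, which yields the finite model property for $\CL$ with respect to their topological semantics.

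I expect the main obstacle to be a careful reading of the translation in~\cite{IK2021} to confirm two points: that it sends a finite frame to a space with finite carrier (rather than, say, one whose points are filters or sequences that might form an infinite collection even over a finite base), and that the truth lemma holds in the direction needed to transfer the refutation of $A$. Once both are verified, no further work is required, since Theorem~\ref{SV2} already supplies the finite simplified $\CL$-model that seeds the whole construction.
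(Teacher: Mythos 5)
Your proposal is correct and follows essentially the same route as the paper, which derives the corollary directly by feeding the finite simplified $\CL$-frame from Theorem~\ref{SV2} into the Iwata--Kurahashi frame-to-bitopological-space translation. The paper gives no further detail beyond this observation, so your finiteness check and appeal to the truth lemma are exactly the intended argument.
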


\section*{Acknowledgement}

The first author was supported by Foundation of Research Fellows, The Mathematical Society of Japan. 
The second author was supported by JSPS KAKENHI Grant Numbers JP19K14586 and JP23K03200.



\bibliographystyle{plain}
\bibliography{ref}

\end{document}